\newcommand{\vertex}{\node[vertex]}
\tikzstyle{vertex}=[circle, draw, inner sep=0pt, minimum size=6pt]
\newtheorem{theorem}{Theorem}
\newtheorem{lemma}{Lemma}
\newtheorem{corollary}{Corollary}
\begin{document}

\title{Power domination in cubic graphs and Cartesian products}
\author{
Sarah E. Anderson$^{a}$
\and
K. Kuenzel$^{b}$\\
}

\date{}

\maketitle

\begin{center}
$^a$ Department of Mathematics, University of St. Thomas, St. Paul, MN 55105\\
$^b$ Department of Mathematics, Trinity College, Hartford, CT 06106\\

\end{center}
\vskip15mm

\begin{abstract}
The power domination problem focuses on finding the optimal placement of phase measurement units (PMUs) to monitor an electrical power network. In the context of graphs,  the power domination number of a graph $G$, denoted $\gamma_P(G)$, is the minimum number of vertices needed to observe every vertex in the graph according to a specific set of observation rules. In \cite{ZKC_cubic}, Zhao et al. proved that if $G$ is a connected claw-free cubic graph of order $n$, then $\gamma_P(G) \leq n/4$. In this paper, we show that  if $G$ is a claw-free diamond-free cubic graph of order $n$, then $\gamma_P(G) \le n/6$, and this bound is sharp. We also provide new bounds on $\gamma_P(G \Box H)$ where $G\Box H$ is the Cartesian product of graphs $G$ and $H$. In the specific case that $G$ and $H$ are trees whose power domination number and domination number are equal, we show the Vizing-like inequality holds and $\gamma_P(G \Box H) \ge \gamma_P(G)\gamma_P(H)$.  
\end{abstract}
{\small \textbf{Keywords:} Power domination, cubic graphs, multigraphs, Cartesian products} \\
\indent {\small \textbf{AMS subject classification:} }05C69, 05C70

\section{Introduction} \label{sec:intro}
Power domination is a graph searching process that arose from the problem of monitoring an electrical power network. Phase measurement units (PMUs) are placed along the power network to observe the information using a two step process. Due to the cost, the goal is to use as few PMUs as possible while still being able to monitor the entire power network. This problem has been studied in various contexts, and it was first investigated using graphs by Haynes et al. in \cite{electricgrids}.

Haynes et al. modeled an electrical power network using a graph where the vertices represent the electric nodes and the edges represent transmission lines between two electrical nodes. Let $G = (V(G), E(G))$ be the simple graph representing this electrical power network, and let $S \subseteq V(G)$ be a subset of vertices that represents where the PMUs are placed. We label all vertices in $S$ as well as all the edges incident to a vertex in $S$ as observed. We then label other vertices and edges in the graph as observed using the following rules.

\begin{itemize}
\item Any vertex that is incident to an observed edge is observed.
\item Any edge joining two observed vertices is observed.
\item If a vertex is incident to a total of $k > 1$ edges and if $k - 1$ of these edges are observed, then all $k$ of these edges are observed.
\end{itemize}

If $S$ eventually observes all of the vertices and edges in the graph, then we say $S$ is a {\it power dominating set of $G$}. Since our goal is to use the smallest number of PMUs while observing the entire graph, we want to find the smallest set $S$ that power dominates the graph. The cardinality of the the smallest power dominating set of a graph $G$ is called the power domination number, which is denoted by $\gamma_P(G)$. 

Over the years, this original definition of power domination has been reshaped. In \cite{gridgraphs}, it was noted that one only needs to consider when all the vertices are observed. Since then, the definition of power domination has simplified so that only the vertices of the graph are observed. We present this new definition as well for ease of reference. Let $G = (V(G), E(G))$ be a simple graph. Due to the physical nature of this problem, we will assume $G$ has finite order in this paper. Let $S \subseteq V(G)$. We label the vertices in the graph as observed using the following rules.

\begin{itemize}
\item \textbf{Initialization Step (Domination Step):} All vertices in $S$ as well as all neighbors of vertices in $S$ are observed.
\item \textbf{Propagation Step (Zero Forcing Step):} Every vertex which is the only unobserved neighbor of some observed vertex becomes observed.
\end{itemize}

Note that while the initialization step can only occur once, the propagation step can occur as many times as needed. As before, $S$ is a {\it power dominating set} if all the vertices of the graph are eventually observed. Recall that a set $D\subseteq V(G)$ is referred to as a dominating set of $G$ if $V(G) = N[D]$. Therefore, the above initialization step is referred to as the {\it domination step} since we say $S$ dominates $\cup_{s\in S}N[s]$ in domination theory. Furthermore, the cardinality of the smallest dominating set of a graph $G$ is called the {\it domination number} and is denoted by $\gamma(G)$. If $D$ is a dominating set of cardinality $\gamma(G)$, we refer to $D$ as a $\gamma(G)$-set. The propagation step is called the {\it zero forcing step} since this process without the domination step is known as zero forcing. Zero forcing has been extensively studied, and it is notable for its connection to the minimum rank problem \cite{AIM}. A set that observes the entire graph using just the zero forcing process (or the propagation step) is known as a zero forcing set. The cardinality of the smallest zero forcing set of a graph $G$ is called the {\it zero forcing number}, which is denoted by $Z(G)$. Since any dominating set or zero forcing set of a graph $G$ is also a power dominating set of $G$, $\gamma_P(G) \leq \min \{\gamma(G), Z(G)\}$.

The power domination number of various graphs has been studied (see \cite{PDGP_BF, gridgraphs, PDGP_LCCK}). We will focus our attention on cubic graphs as well as the Cartesian product of graphs. 

First, we consider the power domination number of specific cubic graphs. In \cite{ZKC_cubic}, Zhao et al. proved that if $G$ is a connected claw-free cubic graph of order $n$, then $\gamma_P(G) \leq n/4$. Dorbec et al. extended this result in \cite{Dorbec_cubic} and showed the following.

\begin{theorem} \cite{Dorbec_cubic}  Let $G$ be a connected  cubic graph on $n$ vertices. If $G$ is not the complete bipartite
graph $K_{3,3}$, then $\gamma_P(G) \leq n/4$.
\end{theorem}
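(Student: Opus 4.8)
The plan is to prove the bound by strong induction on the order $n$, carried out not over cubic graphs themselves but over the wider class of connected cubic multigraphs (allowing parallel edges). This broadening is forced on us because the natural inductive move---deleting an already-observed chunk of the graph---destroys $3$-regularity, and the only way to restore it while controlling the order is to reconnect the severed edges, which can create parallel edges. The target ratio $n/4$ dictates the accounting: each PMU must be charged to at least four vertices. A single PMU already dominates its closed neighbourhood, which in a cubic graph has exactly four vertices, so the bound would follow from $\gamma(G)\le n/4$ if domination alone sufficed; but the domination number of a connected cubic graph can exceed $n/4$, so the proof must genuinely exploit the propagation step to absorb the shortfall in the hard configurations.

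For the inductive step I would locate a \emph{reducible configuration}: a vertex $v$ (or a small connected subgraph around $v$) at which placing one PMU observes, through the domination step followed by a bounded amount of local propagation, a set $X$ with $|X|\ge 4$. I then delete $X$, and suppress or reconnect the resulting degree-deficient vertices so as to obtain a connected cubic multigraph $G'$ on $n-|X|$ vertices, to which the induction hypothesis applies, yielding a power dominating set $S'$ with $|S'|\le (n-|X|)/4$. The candidate for $G$ is $S=S'\cup\{v\}$, and $|S|\le (n-|X|)/4+1\le n/4$ precisely because $|X|\ge 4$. The crux is to prove that $S$ really power dominates $G$: one must show that every vertex observed during the run on $G'$ is also observed during the run on $G$, i.e.\ that the repairs made at the seam neither block propagation nor rely on adjacencies absent from $G$.

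I expect the main obstacle to be twofold. First, the \textbf{seam analysis}: verifying that propagation lifts from $G'$ back to $G$ requires tracking how the reconnected or parallel edges interact with the ``exactly one unobserved neighbour'' rule, and in a multigraph one must be careful about how parallel edges are counted in a vertex's degree when deciding whether it can force. Second, and more seriously, the \textbf{configuration analysis}: one has to show that \emph{every} sufficiently large connected cubic multigraph contains at least one reducible configuration yielding $|X|\ge 4$ per PMU, and that the finite list of irreducible graphs at the bottom of the induction is handled directly. This is where the single exception $K_{3,3}$---for which $\gamma_P=2>6/4$---must be shown to be the only genuine obstruction, and where a discharging argument may be needed to organise the global count so that the rare configurations forcing an extra PMU are paid for by neighbouring configurations that overperform. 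Establishing that $K_{3,3}$ is the unique irreducible obstruction, and that the multigraph repairs do not spawn uncontrolled new exceptional small graphs, is the delicate heart of the argument.
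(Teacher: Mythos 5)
This theorem is quoted in the paper from \cite{Dorbec_cubic}; the paper itself contains no proof of it, so there is nothing internal to compare your argument against. Judged on its own terms, your proposal is a strategy outline rather than a proof, and the gap is essentially the entire content of the theorem. You correctly identify that the crux is to exhibit, in every sufficiently large connected cubic (multi)graph, a reducible configuration whose deletion pays for one PMU with at least four vertices, and to verify that propagation lifts across the seam --- but you produce no such configurations, no case analysis showing one always exists, and no discharging scheme. Those are not routine details: in the actual argument of Dorbec et al.\ this configuration/structural analysis occupies the bulk of the paper, and the present paper's own Section 2 illustrates how delicate the multigraph propagation issue is (an observed vertex with a single unobserved neighbour joined by parallel edges need not force it).

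There is also a concrete flaw in the framing of your induction. The hypothesis ``every connected cubic multigraph other than $K_{3,3}$ satisfies $\gamma_P\le n/4$'' is false: a cubic multigraph can have as few as $2$ vertices (two vertices joined by a triple edge), and any graph on fewer than $4$ vertices has $\gamma_P\ge 1>n/4$. More generally your surgery (delete $X$, suppress or reconnect half-edges) can produce small multigraphs, and can disconnect $G'$, in which case each component must separately avoid the exceptional list. So before the induction can even start you must formulate the correct strengthened statement --- identifying the full finite family of exceptional connected cubic multigraphs, not just $K_{3,3}$ --- and prove that your reductions never land on an exceptional component. As written, the proposal names the hard parts accurately but does not carry out any of them, so it does not constitute a proof.
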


Recall that we define a diamond to be the graph $K_4 - e$ for any edge $e$ in $K_4$. We improve the upper bound given above as follows.

\begin{theorem} \label{thm:uppern3}
If $G$ is a claw-free diamond-free cubic graph of order $n$, then $\gamma_P(G) \le n/6$ and this bound is sharp. 
\end{theorem}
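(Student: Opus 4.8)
The plan is to exploit the rigid local structure that claw-freeness and diamond-freeness force on a cubic graph. First I would prove a structural lemma: in a claw-free diamond-free cubic graph $G$, every vertex lies in exactly one triangle, so the triangles partition $V(G)$. Indeed, claw-freeness forces the three neighbours of any vertex $v$ to induce at least one edge, putting $v$ in a triangle; if $v$ were in two triangles, or if a triangle vertex had a chord to the third neighbour of $v$, one immediately produces an induced diamond. Hence there are exactly $m = n/3$ triangles, each vertex contributes exactly one edge leaving its triangle, and contracting each triangle to a single node yields a cubic multigraph $H$ on $m$ vertices (loopless, but possibly with parallel edges).

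Next I would translate power domination of $G$ into a purely combinatorial statement about $H$. The key observation is that a single PMU placed at any vertex of a triangle $T$ observes all of $T$ (itself and its two triangle-neighbours) and then, by one propagation step, the external neighbours of $T$ as well; so ``$T$ is fully observed'' behaves exactly like an infection that sends one token across each of the three edges of $H$ leaving $T$. A not-yet-observed triangle becomes fully observed precisely when it has received two such tokens, i.e.\ when two of its neighbours in $H$ are already infected. This is threshold-$2$ bootstrap percolation on $H$. A standard stability argument then gives the clean characterization: a set $A_0 \subseteq V(H)$ of seed triangles percolates to all of $H$ if and only if $H - A_0$ is a forest, because the final uninfected set, if nonempty, induces a subgraph of minimum degree at least $2$ and hence contains a cycle. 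Consequently $\gamma_P(G)$ equals the minimum size of a feedback vertex set of $H$; in particular the upper bound follows once this quantity is bounded.

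With this reduction in hand, $\gamma_P(G) \le n/6$ is equivalent to the following lemma, which I regard as the heart of the matter: \emph{every cubic multigraph on $m$ vertices has a feedback vertex set of size at most $m/2$} (equivalently, an induced forest on at least half of its vertices). I would prove this by induction on $m$, reducing the multigraph after deleting a well-chosen vertex; the delicate points are handling parallel edges (a digon forces one of its two endpoints into the set), cubic multigraphs with bridges, where one cannot simply invoke a perfect matching, and the fact that suppressing the degree-two vertices created by a deletion must be done without introducing loops. Alternatively one can deduce it from the general bound that any multigraph has an induced forest of order at least $\sum_{v}\frac{2}{d(v)+1}$, which for a $3$-regular multigraph evaluates to exactly $m/2$. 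Either route, combined with the reduction, yields $\gamma_P(G) = \mathrm{fvs}(H) \le m/2 = n/6$.

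Finally, for sharpness I would exhibit extremal graphs directly from the reduction by finding a cubic multigraph $H$ whose minimum feedback vertex set has size exactly $m/2$. The triangular prism ($H$ being two vertices joined by a triple edge, $n=6$) already gives $\gamma_P = 1 = n/6$, and more generally the ``necklace of digons'' --- a cycle of $k$ doubled edges, realized in $G$ as a ring of $2k$ triangles in which consecutive triangles are joined alternately by two and by one edge --- is a connected claw-free diamond-free cubic graph on $n = 6k$ vertices whose $k$ pairwise-disjoint digons must each be hit, forcing $\gamma_P = k = n/6$. The main obstacle throughout is the feedback-vertex-set lemma for cubic \emph{multigraphs}: the simple-graph intuition and matching arguments degrade in the presence of parallel edges and bridges, so the induction's case analysis is where the real work lies.
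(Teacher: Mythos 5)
Your reduction is genuinely different from the paper's argument and, in outline, sound. The paper first restricts to the $2$-edge-connected case, invokes Oum's structure theorem, lifts a Petersen $2$-factor of the underlying cubic multigraph to a $2$-factor of $G$ into cycles of length at least $6$, and then chooses one endpoint from each edge of a perfect matching so as to hit every cycle; bridges are handled afterwards by decomposing along $B(G)$ and smoothing degree-$2$ vertices. Your route instead contracts the (correctly established) triangle partition to a cubic multigraph $H$ on $m=n/3$ vertices, translates propagation into threshold-$2$ bootstrap percolation on $H$, and observes that a seed set percolates whenever its complement induces a forest: a stalled nonempty set has minimum degree at least $2$ within itself and hence contains a cycle, where digons must count as cycles, which is exactly why $H$ has to be treated as a multigraph. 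All of this checks out, and it bypasses the $2$-edge-connectivity hypothesis, the $2$-factor and matching machinery, and the entire bridge/tree decomposition --- a real simplification. Your sharpness example $K_3 \Box K_2$ (where $\gamma_P\ge 1$ is automatic) is also cleaner than the paper's $12$-vertex example.

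Two caveats. First, the heart of your proof --- every cubic multigraph on $m$ vertices has a feedback vertex set of size at most $m/2$ --- is asserted rather than proven, and you yourself flag the inductive route as delicate. The lemma is true, and your alternative route does close the gap: processing the vertices in a uniformly random order and adding $v$ to $F$ whenever $v$ has at most one edge (counted with multiplicity) into the current $F$ produces an induced forest of expected size at least $\sum_v 2/(d(v)+1)=m/2$; one only has to check that a double edge at $v$ does not push $v$'s success probability below $1/2$, and it does not, since $v$ precedes its doubled neighbour with probability $1/2$. Until that argument is written out, the proof is incomplete. Second, you claim $\gamma_P(G)=\mathrm{fvs}(H)$, but only $\gamma_P(G)\le \mathrm{fvs}(H)$ is justified: the domination step at a seed also observes one vertex of an adjacent triangle, so seeds are strictly stronger than pure percolation sources and the lower-bound direction does not follow from your stability argument. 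This is harmless for the theorem, but it undercuts the ``each digon must be hit'' reasoning for your necklace family; the prism alone suffices for sharpness. Finally, $K_4$ is induced-diamond-free yet violates the bound, so your structural lemma (every vertex lies in exactly one triangle) silently assumes the subgraph reading of ``diamond-free''; the paper makes the same tacit assumption, but your lemma should state the exclusion of $K_4$ explicitly.
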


Next, we investigate the relationship between the power domination number, the domination number, and the zero forcing number in the  Cartesian product of two graphs. For graphs $G$ and $H$, the Cartesian product $G\Box H$ has vertex set
$V(G \Box H) = \{(g,h)\,:\, g\in V(G), h \in V(H)\}$.  Two vertices $(g_1,h_1)$ and $(g_2,h_2)$ are adjacent in $G\Box H$ if either $g_1=g_2$ and $h_1h_2\in E(H)$ or $h_1=h_2$ and $g_1g_2 \in E(G)$. We provide new lower and upper bounds on $\gamma_P(G \Box H)$ for two connected graphs $G$ and $H$. In \cite{pdprod}, Koh and Soh claim that  for any graph $G$ and any tree $T$ $\gamma_P(G\Box T) \ge \gamma_P(G)\gamma_P(T)$. However,  we provide an example that illustrates a flaw in their proof technique. It may however be the case that the statement is true, and in fact, we show the following.

\begin{theorem} \label{thm:treeprod}
If $T_1$ and $T_2$ are trees with $\gamma_P(T_1)= \gamma(T_1)$ and $\gamma_P(T_2) = \gamma(T_2)$, then $\gamma_P(T_1 \Box T_2) \ge \gamma_P(T_1)\gamma_P(T_2)$.  
\end{theorem}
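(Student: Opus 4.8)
The plan is to reduce the statement to a counting bound for sets that propagation cannot penetrate. First, since $\gamma_P(T_i)=\gamma(T_i)$, the desired inequality is equivalent to $\gamma_P(T_1\Box T_2)\ge \gamma(T_1)\gamma(T_2)$. Note that although Vizing's conjecture is a theorem for trees and gives $\gamma(T_1\Box T_2)\ge\gamma(T_1)\gamma(T_2)$, this is of no direct use: since $\gamma_P\le\gamma$ always, a lower bound on $\gamma(T_1\Box T_2)$ says nothing about $\gamma_P(T_1\Box T_2)$, so I need a genuine lower bound on the power domination number of the product. The engine is the following standard reformulation. Call a nonempty set $F\subseteq V(G)$ a \emph{fort} if no vertex outside $F$ has exactly one neighbor in $F$. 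Since the propagation step can never observe a vertex of an entirely unobserved fort, one checks that $S$ is a power dominating set of $G$ if and only if $N[S]$ meets every fort of $G$. Consequently, if $F_1,\dots,F_k$ are forts whose closed neighborhoods $N[F_i]=\bigcup_{x\in F_i}N[x]$ are pairwise disjoint, then each $s\in S$ lies in at most one $N[F_i]$, so $N[s]$ meets at most one $F_i$; as $N[S]$ must meet all $k$ of them, $\gamma_P(G)\ge k$. Writing $\phi(G)$ for the largest such family, this gives $\gamma_P(G)\ge\phi(G)$.

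Second, I would show that $\phi$ is supermultiplicative on Cartesian products. The key observation is a product-of-forts lemma: if $F$ is a fort of $G$ and $F'$ is a fort of $H$, then $F\times F'$ is a fort of $G\Box H$. This is a short case check on a vertex $(u,w)\notin F\times F'$: if $u\notin F$ and $w\notin F'$ it has no neighbor in $F\times F'$, while if exactly one coordinate lies in its factor-fort the number of neighbors of $(u,w)$ in $F\times F'$ equals $|N(u)\cap F|$ or $|N(w)\cap F'|$, which is never $1$. Now take families $F_1,\dots,F_p$ in $T_1$ and $F'_1,\dots,F'_q$ in $T_2$ realizing $\phi(T_1)=p$ and $\phi(T_2)=q$, i.e. with pairwise disjoint closed neighborhoods in each factor. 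I claim the $pq$ product forts $F_i\times F'_j$ again have the separation property that no closed neighborhood $N[(v,w)]$ of $G\Box H$ meets two of them. Indeed, $N[(v,w)]$ meeting $F_i\times F'_j$ forces $v\in N[F_i]$ with $w\in F'_j$, or $v\in F_i$ with $w\in N[F'_j]$; if it also met $F_{i'}\times F'_{j'}$, then tracing the coordinates and using that the $N[F_i]$ (respectively the $N[F'_j]$) are pairwise disjoint forces $i=i'$ and $j=j'$. Hence $\phi(T_1\Box T_2)\ge pq$, and with the first step, $\gamma_P(T_1\Box T_2)\ge\phi(T_1)\phi(T_2)$.

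The remaining and most substantial step is to prove that $\phi(T)\ge\gamma(T)$ for any tree with $\gamma_P(T)=\gamma(T)$; combined with the always-true bound $\phi(T)\le\gamma_P(T)$ this yields $\phi(T)=\gamma(T)$, whence $\gamma_P(T_1\Box T_2)\ge\phi(T_1)\phi(T_2)=\gamma(T_1)\gamma(T_2)=\gamma_P(T_1)\gamma_P(T_2)$, completing the proof. To locate $\gamma(T)$ separated forts I would start from a maximum $2$-packing, using the classical theorem of Meir and Moon that a tree satisfies $\rho(T)=\gamma(T)$, where $\rho(T)$ is the $2$-packing number, so that there are $\gamma(T)$ vertices with pairwise disjoint closed neighborhoods. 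The heart of the matter is to show that the hypothesis $\gamma_P(T)=\gamma(T)$ forces a local fort at a controlled neighborhood of each packing vertex: concretely, that each such vertex is a support vertex carrying at least two leaves, so that its set of leaves is a fort and these leaf-forts inherit pairwise disjoint closed neighborhoods from the packing. I expect to carry this out by induction on $|V(T)|$, peeling a strong support vertex together with its leaves and arguing that a degree-two pendant path would let propagation observe a leaf for free, producing a power dominating set smaller than $\gamma(T)$ and contradicting $\gamma_P(T)=\gamma(T)$.

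The hard part is exactly this last structural claim, and two points require care. First, power domination is not monotone under vertex deletion, so inheriting the equality $\gamma_P=\gamma$ for the subtree obtained after peeling a strong support vertex is delicate and will need a tailored argument tracking how a minimum dominating set and the propagation interact with the removed star. Second, one must verify that the leaf-forts produced at distinct packing vertices really do have disjoint closed neighborhoods in $T$; this follows once the packing is chosen among support vertices, but the bookkeeping must ensure the removed stars do not overlap. By comparison, the product-side reasoning of the first two paragraphs is routine, so essentially all of the difficulty is concentrated in establishing $\phi(T)=\gamma(T)$ for trees with $\gamma_P(T)=\gamma(T)$.
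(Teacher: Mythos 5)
Your product-side machinery is sound and is, in disguise, exactly what the paper does: your product-of-forts lemma and the disjoint-closed-neighborhood counting bound $\gamma_P(G)\ge\phi(G)$ are both correct, and the paper's set $X_{ij}$ is nothing but the closed neighborhood (together with $(x_i,y_j)$) of the product fort $\{\ell_1,\ell_2\}\times\{\ell_1',\ell_2'\}$, packed into the blocks $\mathcal{B}_{i,j}$ of a partition so that the pigeonhole does the counting. So the first two paragraphs would survive refereeing.

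The genuine gap is the factor-side input, which you yourself flag as ``the most substantial step'' and do not prove: that a tree with $\gamma_P(T)=\gamma(T)$ admits $\gamma(T)$ forts with pairwise disjoint closed neighborhoods. Your proposed route --- take a maximum $2$-packing via Meir--Moon and argue each packing vertex is a strong support vertex --- is not correctly set up: a maximum $2$-packing need not contain any support vertex (in the double star $S_{2,2}$, where $\gamma_P=\gamma=2$, the pair consisting of one leaf from each side is a maximum $2$-packing of leaves), so the claim you intend to prove by induction is false as stated, and the induction itself (peeling strong support vertices while preserving $\gamma_P=\gamma$) is left entirely open. The paper closes this step in one line by invoking the known characterization (Theorem~\ref{thm:treeequality}): for a tree of order at least $3$, $\gamma_P(T)=\gamma(T)$ if and only if $T$ has a unique $\gamma(T)$-set every vertex of which is a strong support vertex. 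Given that, the leaf-set $L_i$ of each such vertex $x_i$ is a fort with $N[L_i]=L_i\cup\{x_i\}$, and these closed neighborhoods are pairwise disjoint since each leaf has a unique support vertex; this yields $\phi(T)\ge\gamma(T)$ immediately and your argument then goes through (with the trivial cases $T_i\in\{K_1,K_2\}$ handled by Lemma~\ref{thm:Cartlower}). Without citing or reproving that characterization, your proof is incomplete precisely where you predicted it would be hard.
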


The remainder of this paper is organized as follows. In Section \ref{cubicgraphs}, we provide new upper bounds on the power domination number of claw-free diamond-free cubic graphs. In Section \ref{cartesianproducts}, we consider the power domination number of the Cartesian product of two graphs. We conclude this paper with Section \ref{conc} and provide some future directions and open problems.

\section{Cubic graphs}
\label{cubicgraphs}
 In \cite{ZKC_cubic}, Zhao et al. provided an upper bound for the power domination of a claw-free cubic graph and classified all such graphs that achieve the upper bound as follows. We refer to a {\it diamond} as the graph $D$ obtained from the complete graph $K_4$ by deleting an edge. In \cite{ZKC_cubic}, they defined $D_k$ for each positive integer $k$ to be the connected claw-free cubic graph formed from $k$ disjoint copies of $D$ by joining pairwise $2k$ vertices of degree two. Furthermore, they let $\mathcal{A} = \{D_k \mid k \ge 1\}$. 
 
 \begin{theorem}\cite{ZKC_cubic}\label{thm:claw-free} If $G$ is a connected claw-free cubic graph of order $n$, then $\gamma_P(G) \le n/4$ with equality if and only if $G \in \mathcal{A}$.
 \end{theorem}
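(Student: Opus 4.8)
The plan is to exploit the rigid local structure forced by the claw-free hypothesis and to reduce the problem to a small, well-understood family of ``units.'' First I would record the local structural lemma: in a cubic graph the neighbourhood $N(v)$ of any vertex consists of exactly three vertices, and since $G$ is claw-free this set cannot be independent, so $N(v)$ contains an edge and $v$ lies in a triangle. Because $G$ is cubic, two triangles sharing a vertex must share an edge (a vertex in two edge-disjoint triangles would have degree at least four), so two triangles either meet in an edge and together span a diamond $K_4-e$, or are vertex-disjoint. Consequently $V(G)$ partitions into \emph{triangle-units} (triangles whose three outgoing edges go to distinct units) and \emph{diamond-units} (copies of $K_4-e$, whose two degree-two vertices each carry one outgoing edge). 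Writing $t$ and $d$ for the numbers of these units gives $n=3t+4d$, and contracting each unit produces a connected multigraph $H$ on $t+d$ vertices in which triangle-units have degree three and diamond-units have degree two.

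Second, I would analyse how observation propagates between units, isolating the role of diamonds. A PMU placed at a vertex of a unit observes that unit by domination and then forces observation out along each outgoing edge onto the adjacent vertex; a triangle, once two of its three vertices are observed, is completely observed and transmits along its third outgoing edge, so triangles let observation cascade. By contrast, the two internal (degree-three) vertices $u,w$ of a diamond cannot be observed by propagation entering only through its two outgoing edges: with both external vertices observed, each still has two unobserved neighbours $u,w$, and the process stalls. This yields the key lemma that \emph{every power dominating set of $G$ contains a vertex of each diamond-unit}, since observing $u$ or $w$ requires a PMU in $N[u]\cup N[w]$, which is exactly the diamond. Hence $\gamma_P(G)\ge d$.

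Third, for the upper bound I would induct on $n$, using diamond removal as the reduction. Given a diamond-unit with internal vertices $u,w$ and external vertices joined to $x',y'$ outside it, delete the four diamond vertices and add the edge $x'y'$ to obtain a cubic graph $G'$ of order $n-4$; by induction $\gamma_P(G')\le (n-4)/4$. Taking a minimum power dominating set $S'$ of $G'$ and setting $S=S'\cup\{u\}$ gives $|S|\le n/4$, and one checks that $u$ observes the whole diamond and forces observation onto $x'$ and $y'$, so the propagation available across the edge $x'y'$ in $G'$ is reproduced in $G$ and $S$ power dominates $G$. When $G$ is diamond-free, so that $H$ is genuinely cubic and $n=3t$, I would instead seed observation so that it cascades through chains of triangles, covering at least four vertices per PMU; here the slack $3t/4$ is comfortable (and in fact the stronger $n/6$ bound holds). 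The main obstacle in this step is ensuring the reduction preserves the claw-free cubic structure: adding $x'y'$ can create a multi-edge (if $x'=y'$ or $x'y'\in E(G)$) or a claw at $x'$ or $y'$, and these degenerate configurations must be enumerated and handled separately, possibly by removing a larger gadget or by a direct argument.

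Finally, for the equality characterization I would argue that any triangle-unit, or any diamond whose external edges are not arranged into a single cycle of diamonds, lets the cascading described above observe strictly more than four vertices per PMU, forcing $\gamma_P(G)<n/4$. Thus equality requires $t=0$ and the reduced multigraph $H$ to be $2$-regular and connected, i.e.\ a cycle; tracing this back through the contraction shows $G$ is built from $d$ diamonds by joining their $2d$ external vertices in a cycle, which is precisely the definition of $D_d$, so $G\in\mathcal A$. Conversely one verifies directly that each $D_k$ meets the lower bound $\gamma_P(D_k)=k=n/4$ using the diamond lemma. I expect this equality direction to be the main difficulty: it requires showing that \emph{every} occurrence of a triangle, and any deviation from the pure diamond-cycle, produces quantifiable savings, which demands a careful and exhaustive case analysis of how units attach to one another.
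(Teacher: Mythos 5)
First, a point of orientation: the paper does not prove this statement at all --- it is quoted verbatim from Zhao, Kang and Chang \cite{ZKC_cubic} as background, so there is no internal proof to compare against; your proposal has to stand on its own. Its skeleton is sound and matches the known structure theory: every vertex of a connected claw-free cubic graph lies in a triangle, the triangles organize into vertex-disjoint triangle-units and diamond-units (with the one exception of $K_4$ itself, which your partition claim misses --- $K_4$ is not a triangle-unit plus anything, and it matters because $K_4 = D_1 \in \mathcal{A}$ is an equality case), and your ``diamond lemma'' is correct and correctly proved: the two interior vertices of a diamond can only be observed if $S$ meets the diamond, whence $\gamma_P(G) \ge d$. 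The diamond-removal induction $S = S' \cup \{u\}$ with the edge $x'y'$ added is also essentially right, and your claw-freeness check survives scrutiny (indeed $x' = y'$ is impossible, since it would create a claw at $x'$).

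The genuine gaps are twofold. The more serious one is the base case: when $G$ is diamond-free you assert that seeding observation so it ``cascades through chains of triangles'' covers at least four vertices per PMU and that ``the slack is comfortable,'' but no construction is given. This is not a routine verification --- exhibiting a power dominating set of size at most $n/4$ (let alone $n/6$) in a claw-free diamond-free cubic graph is exactly where the real work lies; the present paper needs the full machinery of Theorems~\ref{thm:Petersen} and \ref{thm:multigaph-1factor} (Petersen and Plesn\'{i}k $2$-factors, carefully chosen matchings between triangles of the cycles, and a separate bridge/tree decomposition for the non-$2$-edge-connected case) just to handle this class, and nothing in your sketch substitutes for that: a single PMU on a triangle dominates its triangle and propagates, but you never show how to place $\lfloor n/4 \rfloor$ PMUs so that propagation does not stall at vertices with two unobserved neighbours. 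Second, the equality characterization is only a promissory note: to conclude $G \in \mathcal{A}$ you must show that \emph{every} configuration with $t \ge 1$ (or with diamonds not forming a single ring) admits a power dominating set of size strictly below $n/4$, which requires tracking equality through every step of your induction, including the degenerate reductions (e.g.\ $x'y' \in E(G)$, where the diamond hangs off a triangle) that you flag but do not handle. As written, the proposal proves the lower bound $\gamma_P(G) \ge d$ and the inductive step over diamonds, but neither the upper bound in the diamond-free case nor either direction of the equality statement beyond the easy verification $\gamma_P(D_k) = k$.
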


From the above, one can see that if $G$ is a connected claw-free cubic graph, the presence of diamonds in $G$ increases the power domination number of $G$. The goal of this section is to show that if $G$ is a connected claw-free diamond-free cubic graph of order $n$, then $\gamma_P(G) \le n/6$. To do so, we will use the following result. 

\begin{theorem}\cite{Oum} A graph $G$ is $2$-edge-connected claw-free cubic if and only if either
\begin{enumerate}
\item[(i)] $G \cong K_4$,
\item[(ii)] $G$ is a ring of diamonds, or
\item[(iii)] $G$ can be built from a $2$-edge-connected cubic multigraph $H$ by replacing some edges of $H$ with strings of diamonds and replacing each vertex of $H$ with a triangle.
\end{enumerate}
\end{theorem}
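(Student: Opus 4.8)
The plan is to prove both directions of the equivalence, with the reverse (sufficiency) direction being routine and the forward (necessity) direction requiring a structural decomposition of $G$ into triangles and diamonds. For sufficiency, I would verify directly that each of the three constructions yields a graph that is cubic, $2$-edge-connected, and claw-free. For $K_4$ this is immediate. For a ring of diamonds and for the triangle/string-of-diamonds construction of (iii), the key points are that replacing a vertex by a triangle preserves cubicity (each of the three triangle vertices keeps exactly one external edge) and claw-freeness (every new vertex lies in a triangle, so no induced $K_{1,3}$ can be centered there), and that inserting a string of diamonds along an edge likewise keeps every vertex inside a triangle while maintaining $2$-edge-connectivity.

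For the necessity direction, the first step is the local observation that every vertex lies in a triangle: if $v$ has neighbors $a,b,c$, then claw-freeness forces an edge among them, say $ab \in E(G)$, so $\{v,a,b\}$ is a triangle. Next I would classify each triangle $T=\{x,y,z\}$ by the behavior of the three external edges leaving $T$ (one from each vertex, since $G$ is cubic). Writing $x',y',z'$ for the external neighbors, there are three possibilities: all three coincide, which forces $\{x,y,z,x'\} \cong K_4$ and hence, by $2$-edge-connectivity, $G \cong K_4$, giving case (i); exactly one pair coincides, say $x'=y'=w$, in which case $\{x,y,z,w\}$ induces a diamond whose two degree-$2$ vertices are $z$ and $w$; or all three are distinct, in which case I call $T$ a \emph{pure triangle}.

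The second step is to organize the diamonds into maximal strings. Starting from a diamond with degree-$2$ vertices $p,q$, I would follow the external edge at $p$ (and at $q$) and show that each such edge either enters a pure triangle or enters the degree-$2$ vertex of another diamond; chaining the latter produces a maximal string of diamonds whose two ends attach to pure triangles or close up on themselves. I would then define a multigraph $H$ by contracting each pure triangle to a single vertex and each maximal string of diamonds to a single edge. Each pure triangle contributes exactly three external edges, so every vertex of $H$ has degree $3$; and since contracting connected subgraphs cannot decrease edge-connectivity, $H$ is $2$-edge-connected. Reversing the contractions exhibits $G$ as built from $H$ exactly as in (iii). The remaining situation is when $G$ contains no pure triangle at all: then $G$ is composed solely of diamonds joined at their degree-$2$ vertices, and cubicity together with $2$-edge-connectivity forces these diamonds to close into a cycle, yielding a ring of diamonds as in case (ii).

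The main obstacle I anticipate is the careful bookkeeping in the contraction argument and the handling of degenerate configurations. In particular, I would need to correctly interpret strings of diamonds that begin and end at the same pure triangle (producing loops in $H$) or that connect the same pair of triangles (producing multiple edges), which is precisely why $H$ is permitted to be a multigraph rather than a simple graph. Separating the genuinely branching case (iii) from the two closed-up degenerate cases (i) and (ii), and verifying that no fourth possibility slips through when the diamond strings form a single cycle with no pure triangle to anchor them, is the delicate part of the case analysis and where most of the effort lies.
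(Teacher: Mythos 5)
The paper does not prove this theorem; it is quoted from Oum's paper as a known result, so there is no internal proof to compare against. Your outline is essentially Oum's original structural argument: every vertex of a claw-free cubic graph lies in a triangle; each triangle is classified as $K_4$, part of a diamond, or pure according to coincidences among the three external neighbors of its vertices; maximal diamond strings are contracted to edges and pure triangles to vertices, producing the cubic multigraph $H$, with the ring of diamonds arising exactly when no pure triangle exists. One refinement to your anticipated bookkeeping: a string of diamonds beginning and ending at the same pure triangle cannot occur, since the triangle's third external edge would then be a bridge of $G$, contradicting $2$-edge-connectivity; so $H$ never acquires loops, and multigraph generality is needed only for parallel edges (two triangles joined by more than one direct edge or string), while the same bridge argument is what forces the degenerate configurations of cases (i) and (ii) to constitute all of $G$ rather than a proper piece of it.
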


Note that since we focus on claw-free and diamond-free cubic graphs, we will use the following corollary. 
\begin{corollary}\cite{Oum}\label{cor:2-factor} A graph $G$ is $2$-edged-connected claw-free diamond-free cubic if and only if $G$ can be built from a $2$-edged-connected cubic multigraph $H$ by replacing each vertex with a triangle. 
\end{corollary}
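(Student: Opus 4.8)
The plan is to read this corollary off the preceding structure theorem of Oum by imposing the extra hypothesis that $G$ is diamond-free; this hypothesis should collapse the three cases of that theorem into the single triangle-replacement construction, so the work splits into a short forward argument and a reverse argument whose only real content is a diamond-free check.

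For the forward direction I would assume $G$ is $2$-edge-connected claw-free diamond-free cubic and apply the theorem, which offers three possibilities. In case (i), $G\cong K_4$, but $K_4$ contains $K_4-e$ as a subgraph and hence contains a diamond, contradicting diamond-freeness. In case (ii), a ring of diamonds contains a diamond by construction, again a contradiction. Thus only case (iii) can occur, so $G$ is built from a $2$-edge-connected cubic multigraph $H$ by replacing some edges with strings of diamonds and each vertex with a triangle; if any edge were replaced by a nonempty string of diamonds, that string would contain a diamond subgraph, so diamond-freeness forces every string to be empty and $G$ to be obtained from $H$ purely by substituting a triangle for each vertex, exactly as claimed.

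For the reverse direction I would take $G$ to arise from a $2$-edge-connected cubic multigraph $H$ by replacing each vertex with a triangle, i.e. the degenerate instance of case (iii) with empty strings. By the backward implication of the theorem, this $G$ is already $2$-edge-connected, claw-free, and cubic, so the only thing left is to verify that $G$ is diamond-free. The structural fact I would isolate first is that each vertex of $G$ has exactly two neighbors inside its own triangle and exactly one neighbor outside it, and that two distinct vertices of a common triangle have distinct external neighbors, since distinct edges of $H$ incident to the same vertex always produce distinct external endpoints, even when $H$ has parallel edges.

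Assuming toward a contradiction a diamond on $\{a,b,c,d\}$ with $a,b$ the two degree-three vertices, cubicity forces $N(a)=\{b,c,d\}$ and $N(b)=\{a,c,d\}$, and the edge $ab$ is either internal to a triangle or external. If $a,b$ lie in a common triangle $T=\{a,b,e\}$, then comparing neighborhoods gives $\{c,d\}=\{e,a'\}=\{e,b'\}$ for the external neighbors $a',b'$, forcing $a'=b'$ and contradicting the fact above; if $ab$ is external with $a,b$ in different triangles, then $\{c,d\}$ must simultaneously equal the pair of triangle-mates of $a$ and of $b$, forcing a two-element set into two disjoint triangles, which is impossible. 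I expect this diamond-free verification in the reverse direction to be the only nonroutine step, precisely because one must keep track of how parallel edges of the multigraph $H$ interact with the triangle gadgets; the forward direction is immediate once the diamond-containing cases are discarded.
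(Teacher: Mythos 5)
Your argument is correct and follows the route the paper intends: the corollary is stated there without proof as an immediate consequence of Oum's structure theorem, and your case analysis (discarding $K_4$, rings of diamonds, and nonempty diamond strings in the forward direction, then verifying diamond-freeness of the triangle-replacement construction in the reverse direction, including the check that distinct vertices of a gadget triangle have distinct external neighbors even when $H$ has parallel edges) simply supplies the routine details. The only point worth flagging is that ruling out case (i) requires reading ``diamond-free'' as forbidding diamond \emph{subgraphs} rather than induced diamonds, so that $K_4$ is excluded; this is indeed the convention the paper must be using, since otherwise Theorem~\ref{thm:uppern3} would fail for $K_4$.
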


When considering a $2$-edge-connected claw-free diamond-free cubic graph $G$, we will often talk about the corresponding cubic multigraph $H$ from which $G$ can be built by replacing each vertex of $H$ with a triangle. Note that Petersen \cite{Petersen} showed that $H$ indeed contains a $2$-factor in the following result. Furthermore, we can build a $2$-factor for $G$ by taking any $2$-factor from the corresponding cubic multigraph $H$ by replacing each vertex with a triangle. 

\begin{theorem}\cite{Petersen}\label{thm:Petersen} Every bridgeless cubic multigraph contains a $2$-factor. 
\end{theorem}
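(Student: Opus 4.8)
The plan is to derive the $2$-factor theorem from the existence of a perfect matching together with Tutte's $1$-factor theorem. First I would record the elementary but crucial observation that in a cubic multigraph $G$ a perfect matching and a $2$-factor are complementary: since every vertex has degree $3$, deleting the edges of a perfect matching $M$ leaves every vertex with degree exactly $2$, so $G - M$ is a spanning $2$-regular subgraph, that is, a $2$-factor. Hence it suffices to prove that every bridgeless cubic multigraph admits a perfect matching, and I may assume $G$ is connected, handling each component separately otherwise.

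To produce the matching I would verify Tutte's condition $o(G - S) \le |S|$ for every $S \subseteq V(G)$, where $o(\cdot)$ counts the components of odd order. Fix $S$ and let $C$ be an odd component of $G - S$. Summing degrees over $V(C)$ gives $3|C|$, which is odd; since each edge internal to $C$ contributes $2$ to this sum, the number of edges leaving $C$ is odd. Moreover, because $C$ is a component of $G - S$, every edge leaving $C$ must have its other endpoint in $S$ (an edge to another component of $G-S$ would merge the two). The bridgeless hypothesis now enters decisively: a single edge from $C$ to $S$ would be a bridge of $G$, so the number of edges between $C$ and $S$ cannot be $1$; being odd, it is therefore at least $3$.

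Summing over all odd components, the number of edges between $S$ and the odd components of $G - S$ is at least $3\,o(G-S)$. Each such edge is incident to a vertex of $S$, and since $G$ is cubic the vertices of $S$ are incident to at most $3|S|$ edges in total. Therefore $3\,o(G-S) \le 3|S|$, i.e. $o(G-S) \le |S|$. The case $S = \emptyset$ is immediate, since a connected cubic multigraph has even order by the handshake lemma and so $o(G) = 0$. Tutte's theorem then supplies a perfect matching, whose complement is the required $2$-factor.

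The step I expect to demand the most care is the edge-counting inequality: I must ensure that every edge leaving an odd component genuinely terminates in $S$, so that no edge is double-counted or misattributed, and that both the parity argument and the implication ``no bridge $\Rightarrow$ at least three edges'' remain valid in the multigraph setting, where parallel edges between $C$ and $S$ are permitted and each contributes to the count. (It is worth noting that the bridgeless condition also rules out loops, since a vertex carrying a loop would meet only one further edge, which would then be a bridge, so no delicate treatment of loops is needed.) Everything else is routine bookkeeping once Tutte's $1$-factor theorem is assumed.
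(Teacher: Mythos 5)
Your argument is correct. The paper does not prove this statement at all---it is quoted as Petersen's classical theorem with only a citation to \cite{Petersen}---so there is no internal proof to compare against; what you have written is the standard modern derivation via Tutte's $1$-factor theorem, and it goes through. The complementarity observation (in a cubic multigraph, removing a perfect matching leaves a spanning $2$-regular subgraph) is exactly right, and your verification of Tutte's condition is sound: for an odd component $C$ of $G-S$ the degree sum $3|C|$ is odd, so the number of $C$--$S$ edges is odd; it cannot be $0$ (same parity reason) and cannot be $1$ by bridgelessness, hence is at least $3$, and the count $3\,o(G-S)\le 3|S|$ follows since each vertex of $S$ absorbs at most $3$ edge-endpoints. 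You were also right to flag the two multigraph subtleties---that parallel edges between $C$ and $S$ all count toward the ``at least $3$'' bound (which only helps), and that loops are excluded because a cubic vertex carrying a loop would have its unique remaining edge as a bridge---and that Tutte's theorem applies unchanged to loopless multigraphs. Note that this route is necessarily anachronistic relative to Petersen's 1891 original, which predates Tutte's theorem by over five decades, but as a self-contained proof of the stated result it is complete.
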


In addition to the above, we will need to use modified versions of some well-known results regarding $2$-edge-connected cubic graphs. In \cite{Plesnik}, Plesn\'{i}k points out that Sch\"{o}nberger  proved the following in 1934. 

\begin{theorem}\cite{Schonberger} Every bridgeless cubic graph has a $1$-factor not containing two arbitrarily chosen edges.
\end{theorem}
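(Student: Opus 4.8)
\noindent The plan is to recast the statement as an ordinary matching problem and then run the classical Tutte/Petersen counting argument with bookkeeping for the two forbidden edges. Write $e_1,e_2$ for the two chosen edges and set $G' = G - \{e_1,e_2\}$, the graph obtained by deleting these two edges while keeping all vertices. The first observation is that a $1$-factor of $G$ avoiding $e_1$ and $e_2$ is exactly a $1$-factor (perfect matching) of $G'$: any perfect matching of $G'$ uses only edges of $G$ other than $e_1,e_2$, and conversely every perfect matching of $G$ avoiding $e_1,e_2$ lives in $G'$. So it suffices to show that $G'$ has a perfect matching, which I would do via Tutte's $1$-factor theorem by verifying $o(G'-S) \le |S|$ for every $S \subseteq V(G')$, where $o(\cdot)$ counts odd components.

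\noindent I would argue by contradiction, mimicking the standard proof of Petersen's theorem. Since $G$ is cubic its order is even, so $o(G'-S) \equiv |S| \pmod 2$; hence if Tutte's condition fails there is a set $S$ with $o(G'-S) \ge |S|+2$. Fix an odd component $C$ of $G'-S$. As $C$ is a component, every $G'$-edge leaving $C$ ends in $S$, so $e_{G'}(C,S) = \partial_{G'}(C)$. The crucial point is a lower bound on $\partial_{G'}(C)$. In $G$ itself $\partial_G(C) = 3|C| - 2e_G(C)$ is odd (since $|C|$ is odd and $G$ is cubic), and it is at least $2$ because $G$ is $2$-edge-connected and $C$ is a proper nonempty vertex set; being odd and at least $2$, it is at least $3$. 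Deleting $e_1,e_2$ lowers this boundary by at most the number $b_C$ of forbidden edges having exactly one end in $C$, so $\partial_{G'}(C) \ge 3 - b_C$. Summing over all odd components, and using that each of $e_1,e_2$ contributes at most $2$ to $\sum_C b_C$ (at most one for each of its two ends), I obtain
\[
\sum_{C} e_{G'}(C,S) \;\ge\; 3\,o(G'-S) - \sum_C b_C \;\ge\; 3(|S|+2) - 4 \;=\; 3|S| + 2 .
\]
On the other hand $\sum_C e_{G'}(C,S) \le \sum_{v \in S}\deg_{G'}(v) \le 3|S|$, a contradiction. Hence Tutte's condition holds, $G'$ has a perfect matching, and the theorem follows.

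\noindent The step I expect to require the most care is the bound $\partial_G(C) \ge 3$: here $C$ is a component of $G'-S$, not of $G-S$, so I cannot directly quote the usual fact about odd components of bridgeless cubic graphs and must instead recover it from first principles---oddness of $\partial_G(C)$ from the degree/parity count, together with $2$-edge-connectivity to exclude the value $1$ (or $0$ in the disconnected case, which I would handle by reducing to connected components). The second delicate piece of accounting is that deleting two edges can decrease the total boundary by at most $4$, which is strictly less than the slack of $6$ supplied by $o(G'-S)\ge|S|+2$; this is precisely why two forbidden edges are tolerated and also signals that the method would fail for three. I would finally dispose of the degenerate cases ($e_1,e_2$ sharing an endpoint, or $S=\emptyset$) by noting that the same inequality applies verbatim.
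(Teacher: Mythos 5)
The paper does not prove this statement; it is quoted as a classical 1934 result of Sch\"{o}nberger (cited through K\"{o}nig's book and Plesn\'{i}k's paper), so there is no in-paper argument to compare yours against. Your proof is correct, and it is the standard one: it is exactly the $r=3$ case of the counting proof of Plesn\'{i}k's generalization (Theorem~\ref{thm:Plesnik}), which is the form the paper actually uses later. The key accounting checks out: for an odd component $C$ of $G'-S$ with $S\neq\emptyset$ (or with at least two components when $S=\emptyset$), $C$ is a proper nonempty subset of a component of $G$, so $\partial_G(C)$ is at least $2$ by bridgelessness and is odd by the parity count $\partial_G(C)=3|C|-2e_G(C)$, hence at least $3$; the two deleted edges reduce $\sum_C\partial(C)$ by at most $4$; and the slack of $6$ coming from $o(G'-S)\ge|S|+2$ absorbs that loss, yielding $3|S|+2\le 3|S|$. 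Your side remarks are also sound: the bound genuinely needs the parity upgrade from $2$ to $3$, the argument visibly breaks for three forbidden edges (as it must, since no perfect matching avoids all three edges at one vertex), and the disconnected and $S=\emptyset$ cases reduce as you say. The only point worth stating explicitly rather than in passing is that every odd component $C$ appearing in the argument is indeed a proper subset of the vertex set of a connected component of $G$, which is what licenses $\partial_G(C)\ge 2$.
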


Plesn\'{i}k generalized the above result  as follows.

\begin{theorem}\cite{Plesnik}\label{thm:Plesnik} Let $G$ be an $(r-1)$-edge-connected regular graph of degree $r>0$ where $|V(G)|$ is even and let $H$ be an arbitrary set of $r-1$ edges. The graph $G' = G - H$ has a $1$-factor.
\end{theorem}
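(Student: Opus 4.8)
The plan is to prove this through Tutte's classical $1$-factor theorem, which states that $G'$ has a $1$-factor if and only if $o(G'-S) \le |S|$ for every $S \subseteq V(G')$, where $o(\cdot)$ counts the odd-order components. Since $V(G') = V(G)$ has even order, I would also lean on the standard parity observation that $o(G'-S) \equiv |V(G)| - |S| \equiv |S| \pmod 2$ for every $S$ (the non-$S$ vertices, numbering $|V(G)|-|S|$, split into components whose sizes sum to $|V(G)|-|S|$). This congruence is precisely what lets me upgrade a slightly weak count into the exact Tutte bound at the very end.

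First I would fix $S \subseteq V(G)$ and let $C_1,\dots,C_k$ be the odd components of $G'-S$, so $k = o(G'-S)$ and the goal is $k \le |S|$. For each $C_i$, consider the edge cut $\partial_G(C_i)$ separating $C_i$ from the rest of $G$. Because every vertex has degree $r$ in $G$, a degree-sum count gives $|\partial_G(C_i)| = r|C_i| - 2e_G(C_i) \equiv r|C_i| \equiv r \pmod 2$, using that $|C_i|$ is odd. Combining this parity with the hypothesis that $G$ is $(r-1)$-edge-connected (so $|\partial_G(C_i)| \ge r-1$) and noting that $r-1 \not\equiv r \pmod 2$, I conclude $|\partial_G(C_i)| \ge r$ for each $i$. (Here $C_i$ is automatically a proper nonempty subset of $V(G)$, since a component equal to all of $V(G)$ would have even order and hence not be odd.)

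Next I would pass from $G$ to $G' = G - H$. Since $C_i$ is a component of $G'-S$, every edge of $G'$ leaving $C_i$ must land in $S$, so the number of $G'$-edges from $C_i$ to $S$ equals $|\partial_G(C_i)| - a_i$, where $a_i$ is the number of edges of $H$ lying in $\partial_G(C_i)$. Summing the resulting inequality $|\partial_G(C_i)| - a_i \ge r - a_i$ over all $i$, and using that the edges of $G'$ incident to $S$ number at most $r|S|$ (each of the $|S|$ vertices has $G$-degree $r$), I obtain $rk - \sum_i a_i \le r|S|$. Each edge of $H$ contributes to at most two of the cuts $\partial_G(C_i)$ (at most one per endpoint lying in a distinct odd component), so $\sum_i a_i \le 2|H| = 2(r-1)$. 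This yields $rk \le r|S| + 2(r-1)$, hence $k \le |S| + 2 - 2/r$, i.e.\ $k \le |S| + 1$ since $k$ and $|S|$ are integers.

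The final and most delicate step is to close the gap between $k \le |S|+1$ and the required $k \le |S|$; this is where the parity congruence $k = o(G'-S) \equiv |S| \pmod 2$ does the work, forcing $k-|S|$ to be an even integer that is at most $1$, hence at most $0$. Thus $o(G'-S) \le |S|$ for all $S$, and Tutte's theorem delivers the $1$-factor. I expect the main obstacle to be the edge bookkeeping rather than any deep structural issue: one must verify $\sum_i a_i \le 2(r-1)$ carefully by distinguishing $H$-edges lying inside a single $C_i$, between two distinct odd components, and from a component to $S$, and one must confirm the boundary case $S = \varnothing$ (where the cut argument alone already forbids any odd component). The interplay of the two parity arguments — one on each individual cut to force $|\partial_G(C_i)| \ge r$, and one on the global count to force $k \le |S|$ — is the crux of the whole argument.
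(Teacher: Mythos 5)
Your proof is correct. Note, however, that the paper does not actually prove this statement: Theorem~\ref{thm:Plesnik} is quoted from Plesn\'{i}k \cite{Plesnik} as a known result, so there is no in-paper argument to compare against. Your argument is the standard (and essentially Plesn\'{i}k's original) route via Tutte's theorem, and every step checks out: the local parity argument correctly upgrades $|\partial_G(C_i)| \ge r-1$ to $|\partial_G(C_i)| \ge r$ because $|\partial_G(C_i)| \equiv r|C_i| \equiv r \pmod 2$ for an odd component $C_i$ (which is automatically a proper nonempty subset since $|V(G)|$ is even); the double count of $G'$-edges meeting $S$ together with $\sum_i a_i \le 2|H| = 2(r-1)$ gives $rk \le r|S| + 2(r-1)$, hence $k \le |S|+1$; and the global parity $o(G'-S) \equiv |V(G)|-|S| \equiv |S| \pmod 2$ closes the gap to $k \le |S|$. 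The only point worth flagging is the one the paper itself leaves to the reader immediately after this theorem, namely that the same argument survives in the multigraph setting needed for Theorem~\ref{thm:multigaph-1factor}; your proof transfers verbatim since none of the counts used distinguishes parallel edges.
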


The above result holds in the specific case where $G$ is a $2$-edge-connected cubic multigraph. We restate a variation of the above result in the specific case where $G$ is a $2$-edge-connected cubic multigraph here as we will use it repeatedly throughout this section. We leave it to the reader to verify that the proof of Theorem~\ref{thm:Plesnik} can be extended as follows.

\begin{theorem}\cite{Plesnik}\label{thm:multigaph-1factor} Let $G$ be a $2$-edge-connected cubic multigraph where $|V(G)|$ is even. For any arbitrary edge $e \in E(G)$, $G$ contains a $2$-factor containing $e$. 
\end{theorem}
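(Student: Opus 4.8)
The plan is to derive this statement from Plesník's theorem (Theorem~\ref{thm:Plesnik}) by exploiting the elementary duality between $1$-factors and $2$-factors in a cubic graph. The key observation is that if $M$ is a $1$-factor (perfect matching) of a cubic (multi)graph $G$, then its edge-complement $F = E(G) \setminus M$ is $2$-regular and spanning, hence a $2$-factor; conversely, the complement of a $2$-factor is a $1$-factor. Consequently, to produce a $2$-factor \emph{containing} a prescribed edge $e$, it suffices to produce a $1$-factor that \emph{avoids} $e$ and then take its complement.

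First I would record that the even-order hypothesis is automatic: summing degrees in a cubic multigraph gives $3|V(G)| = 2|E(G)|$, so $|V(G)|$ is even, which is exactly the parity needed for a perfect matching to exist. Next I would apply Theorem~\ref{thm:Plesnik} with $r = 3$: since $G$ is $2$-edge-connected and $3$-regular, the theorem permits deleting any set $H$ of $r-1 = 2$ edges and still guarantees a $1$-factor of $G - H$. Taking $H$ to be any two-element edge set with $e \in H$ (the choice of the second edge is immaterial, and monotonicity in $H$ shows the conclusion really concerns avoiding the single edge $e$), we obtain a $1$-factor $M$ of $G$ with $M \cap H = \emptyset$, and in particular $e \notin M$. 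Its complement $F = E(G) \setminus M$ is then a $2$-factor of $G$ in which every vertex has degree $3 - 1 = 2$, and since $e$ was excluded from $M$ we have $e \in F$, as required.

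The only genuine work lies in justifying that Plesník's argument, stated for simple regular graphs, carries over to cubic \emph{multigraphs} — this is precisely the step the surrounding text defers to the reader. I would verify it by inspecting the proof of Theorem~\ref{thm:Plesnik}: it proceeds through Tutte-type parity considerations and edge-cut counting, both of which are insensitive to the presence of parallel edges, so the bound ``$r-1$ edges may be deleted while a perfect matching survives'' is unaffected by edge multiplicities. The one point to check carefully is that the regularity and edge-connectivity hypotheses are read multigraph-theoretically — parallel edges count toward vertex degrees and toward the sizes of edge cuts — after which the deduction above goes through verbatim. I expect this multigraph bookkeeping, rather than the complementation argument, to be the main obstacle.
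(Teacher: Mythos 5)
Your proposal is correct and takes essentially the same route the paper intends: apply the multigraph form of Theorem~\ref{thm:Plesnik} with $r=3$ to a two-edge set containing $e$, obtain a perfect matching of $G$ avoiding $e$, and pass to its complementary $2$-factor. The complementation step together with checking that Plesn\'{i}k's argument survives parallel edges is precisely the verification the paper explicitly leaves to the reader, so nothing is missing.
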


As one can surmise, we will need to consider power dominating sets in $2$-edge-connected cubic multigraphs. Suppose we start with a set $S \subset V(G)$ where $G$ is a multigraph and $v$ is an observed vertex with only one unobserved neighbor $w$. If there are multiple edges between $v$ and $w$, then technically, $w$ would not be observed by $v$ according to the original observation rules where we require all edges and vertices to be observed. Therefore, when considering multigraphs, special care must be used in the propagation step in that we cannot merely assume that because $w$ is the only unobserved neighbor of $v$ that $w$ will in fact be observed. Ultimately, this will require that we are able to carefully choose $2$-factors in a $2$-edge-connected cubic  multigraph to avoid issues in the propagation step. We are now ready to consider the power domination number of a claw-free diamond-free cubic graph. We first focus on the special case when $G$ is also $2$-edge-connected.

\begin{theorem}\label{thm:2connected} If $G$ is a $2$-edge-connected claw-free diamond-free cubic graph of order $n$, then $\gamma_P(G) \le \frac{n}{6}$ and this bound is sharp. 
\end{theorem}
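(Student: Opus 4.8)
The plan is to use Corollary~\ref{cor:2-factor} to transfer the problem from $G$ to its underlying cubic multigraph. Write $G$ as the graph obtained from a $2$-edge-connected cubic multigraph $H$ on $m$ vertices by expanding each vertex into a triangle, so that $n=3m$ and the goal becomes $\gamma_P(G)\le m/2$. For a vertex $v$ of $H$ let $T_v$ be its triangle in $G$; each of the three vertices of $T_v$ carries exactly one of the three edges of $H$ incident to $v$. The first step is a local propagation lemma, which I would verify directly in the (simple) graph $G$: (i) a single PMU placed on any vertex of $T_v$ observes all of $T_v$ after the domination step followed by two propagation steps, and (ii) if at least two of the neighbouring triangles of $T_v$ (counted \emph{with multiplicity}, so that a double edge of $H$ counts twice) are already fully observed, then $T_v$ becomes fully observed as well. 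Indeed, a fully observed neighbour forces the vertex of $T_v$ on the shared edge; two such forced vertices then each have a unique unobserved neighbour inside $T_v$, and so one of them forces the third vertex.

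Calling a triangle \emph{active} once fully observed, facts (i)--(ii) show that placing PMUs on the triangles indexed by a set $S\subseteq V(H)$ observes all of $G$ as soon as the threshold-two infection ``a vertex becomes active once at least two of its neighbours are active,'' started from $S$, reaches every vertex of $H$. A short stability argument pins down when this occurs: if a nonempty set $U\subseteq V(H)\setminus S$ stayed inactive forever, then each vertex of $U$ would have at most one active neighbour, hence at least two neighbours inside $U$, so $H[U]$ would have minimum degree at least two and would therefore contain a cycle. Consequently $S$ observes all of $G$ whenever $H-S$ is a forest, and it suffices to produce a set $S$ with $|S|\le m/2$ whose removal leaves a forest.

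To build such an $S$, I would take a $2$-factor $F$ of $H$, which exists by Theorem~\ref{thm:Petersen} since $H$ is bridgeless, and let $M=E(H)\setminus E(F)$ be the complementary perfect matching. The idea is to place into $S$ exactly one endpoint of each matching edge, so that $H-S$ retains no edge of $M$; then every edge of $H-S$ lies in $F$, making $H-S$ a disjoint union of paths and cycles, and it is a forest precisely when every cycle of $F$ loses a vertex to $S$. Choosing which endpoint to take is a system-of-distinct-representatives problem: form the bipartite graph between the cycles of $F$ and the edges of $M$, joining a cycle to each matching edge meeting it. Since every cycle of $F$ has length at least two (as $H$ is loopless) and each vertex meets exactly one matching edge, Hall's condition holds --- any $k$ cycles span at least $2k$ vertices and hence meet at least $k$ matching edges --- so an SDR exists. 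Taking the SDR-endpoint inside each cycle, together with an arbitrary endpoint of every remaining matching edge, gives a set $S$ with $|S|=m/2$ that meets every cycle of $F$; thus $H-S$ is a forest and $\gamma_P(G)\le m/2=n/6$. (One could alternatively exploit Theorem~\ref{thm:multigaph-1factor} to control the chosen $2$-factor, but the matching/SDR route avoids this.)

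The step I expect to require the most care is propagation fact (ii) in the presence of multi-edges of $H$. Because $G$ itself is simple, the propagation rules apply verbatim in $G$; the point flagged earlier in the section is that a double edge of $H$ attaches two vertices of one triangle to two vertices of a neighbour, so a single fully observed neighbour across a double edge already forces two vertices of $T_v$ and hence suffices --- which is exactly why the infection threshold must be counted with multiplicity and why $2$-cycles of $F$ (digons) must also be hit by $S$. Getting this bookkeeping right in all cases is the main obstacle. For sharpness I would exhibit an explicit family attaining $n/6$: the smallest cases are the prism ($H$ the two-vertex triple edge, $n=6$, $\gamma_P=1$) and the triangle-expansion of $K_4$ ($n=12$), for which one checks directly that a single PMU observes only its own triangle and one vertex of each neighbour and then stalls, forcing $\gamma_P=2$. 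I would then extend this to an infinite family built from such gadgets and verify the matching lower bound $\gamma_P(G)\ge n/6$ directly from the triangle structure.
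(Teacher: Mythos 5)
Your argument is correct, and it shares the paper's overall skeleton --- reduce to the underlying $2$-edge-connected cubic multigraph $H$ via Corollary~\ref{cor:2-factor}, take a $2$-factor $F$ of $H$ (Theorem~\ref{thm:Petersen}) with complementary perfect matching $M$, and choose one endpoint of each matching edge so that every cycle of $F$ is hit --- but the two technical hearts of the proof are handled genuinely differently. Where the paper lifts the $2$-factor into $G$, places its set on the ``middle'' vertex of each triangle, and propagates around each lifted cycle (so it needs both that the set dominates all matching-endpoints and that it meets every cycle), you prove a triangle-level activation lemma and reduce everything to the statement that the threshold-two bootstrap process on $H$ started from $S$ percolates whenever $H-S$ is a forest. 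This is a cleaner and strictly more flexible sufficient condition (any $S$ with $H-S$ a forest works, and the PMU may sit on any vertex of each chosen triangle), and your multiplicity bookkeeping for double edges is exactly the care the paper warns is needed for multigraphs. For the selection step, the paper runs a somewhat involved BFS-layer construction on the cycle-adjacency graph, whereas you recast it as a system-of-distinct-representatives problem between the cycles of $F$ and the edges of $M$ and dispatch it with Hall's theorem (any $k$ disjoint cycles of length at least two contain at least $2k$ vertices and hence meet at least $k$ matching edges); this is shorter and easier to verify. Three small points to tidy: the domination step alone already observes each chosen triangle, so the ``two propagation steps'' in your fact (i) are unnecessary; you should record the one-line observation that a loop in a cubic multigraph would create a bridge, so $H$ is loopless and every cycle of $F$ really has length at least two; and for sharpness a single verified example suffices, so your $K_4$-expansion on $12$ vertices (or the graph of Figure~\ref{fig:sharp}) already closes the proof without the promised infinite family.
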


\begin{proof} By Corollary~\ref{cor:2-factor}, we may assume $G$ can be obtained from a $2$-edge-connected cubic multigraph $H$ by replacing each vertex of $H$ with a triangle. By Theorem~\ref{thm:Petersen}, $H$ contains a $2$-factor $\mathcal{C'} = C_1' \cup \cdots \cup C_k'$. We let $\mathcal{C}= C_1 \cup \cdots \cup C_k$ be the $2$-factor  in $G$ obtained from $\mathcal{C'}$ where $C_i$ is obtained from $C_i'$ by replacing each vertex with a triangle. We may enumerate the vertices of $C_i$ as $C_i = x^i_1\dots x^i_{n_i}$ such that $3 \mid n_i$ and $G[\{x^i_j, x^i_{j+1}, x^i_{j-1}\}] = K_3$ for all $j \equiv 1\pmod{3}$. Note that by construction, $|V(C_i)| \ge 6$ for all $1\le i \le k$.

Moreover, there exists a perfect matching between the vertices of 
\[\bigcup_{i\in[k]} \{x^i_1, x^i_4, x^i_7, \dots, x^i_{n_i-2}\}.\]

Let $M = \{w_1v_1, \dots , w_tv_t\}$ be such a perfect matching.  We claim that there exists a way to choose one vertex from each edge in $M$, call the resulting set $D$, such that  $D\cap V(C_i) \ne \emptyset$ for all $i \in [k]$. Indeed, let $J$ be the graph where $V(J) = \{u_1, \dots, u_k\}$ and $u_iu_j\in E(J)$ if and only if there exists an edge $xy \in E(G)$ such that $x$ is on $C_i$ and $y$ is on $C_j$, $i\ne j$. Let $A_r = \{v \in V(J)\mid d_J(u_1, v) = r\}$. Note that each edge in $J$ corresponds to some edge in $M$. Let $M' = \{w_{\alpha_1}v_{\alpha_1}, \dots, w_{\alpha_s}v_{\alpha_s}\}$ be a subset of $M$ where $s = |E(J)|$ and $w_{\alpha_j}v_{\alpha_j} \in M'$ if and only if $w_{\alpha_j}$ is on $C_{\ell}$,  $v_{\alpha_j}$ is on $C_{\ell'}$ and $u_{\ell}u_{\ell'} \in E(J)$. Moreover, we can interchange $w_{\alpha_j}$ and $v_{\alpha_j}$ so that $d_J(u_1, u_{\ell})\le d_J(u_1, u_{\ell'})$. We shall assume $d = \max_v\{ {\rm{dist}}(u_1, v)\}$. Choose a path $P = y_1\dots y_d$ where $u_1 = y_1$ and $y_d \in A_d$. Thus, $y_j \in A_j$ for $j\in\{2, \dots, d\}$. Note that we may reindex the vertices of $J$ so that $u_i = y_i$ for $1 \le i \le d$. Furthermore, we may reindex $C_1, \dots, C_k$ so that $u_i$ in $J$ represents $C_i$ in $G$. Reordering if necessary, we shall assume $\{w_{\alpha_1}v_{\alpha_1}, \dots, w_{\alpha_{d-1}}v_{\alpha_{d-1}}\} \subseteq M'$ where $w_{\alpha_i}$ is on $C_i$, $v_{\alpha_i}$ is on $C_{i+1}$ for $1 \le i \le d-1$, and $w_{\alpha_i}v_{\alpha_i} \in M'$ because $y_iy_{i+1}$ is an edge in $P$.  Consider the set \[D = \{w_{\alpha_1}, \dots, w_{\alpha_{d-1}}, v_{\alpha_d}, \dots, v_{\alpha_s}\} \cup \{w_j\mid w_jv_j \not\in M'\}.\] If $D \cap V(C_i) \ne \emptyset$ for all $i \in[k]$, then we are done. Therefore, we assume there exists some $j' \in [k]$ such that $D\cap V(C_{j'})=\emptyset$. By construction, $D$ contains a vertex from each cycle $C_1, \dots, C_{d-1}$. Now consider $C_j$ for $j \in \{d, \dots, k\}$. This cycle corresponds to the vertex $u_j\in V(J)$. Assume ${\rm{dist}}(u_1, u_j) = \ell$. Therefore, we can find a path $P' = a_1\dots a_{\ell +1} $ where $a_1 = u_1$ and $a_{\ell+1} = u_j$. $a_{\ell}a_{\ell+1}$ corresponds to an edge $w_{\alpha_r}v_{\alpha_r}$ in $M'$ where $v_{\alpha_r}$ is on $C_j$ since $a_{\ell+1} = u_j$. Therefore, as long as $j \not\in [d]$, then $v_{\alpha_r} \in D$. Finally, we must show that $D$ contains a vertex from $C_{d}$. If $C_d$ contains two vertices in $\{x^d_1, x^d_4, \dots, x^d_{n_d-2}\}$ that are adjacent, then $D$ contains a vertex from $C_d$. So we shall assume that every vertex in $\{x^d_1, x^d_4, \dots, x^d_{n_d-2}\}$ is adjacent to some vertex not on $C_d$. There exists an edge in $J$ between $u_d$ and some vertex $u_s$ where $u_s$ is not on $P$. So there exists a corresponding edge $w_{\alpha_p}v_{\alpha_p} \in M'$. If $u_s \in A_{d-1}$, then $v_{\alpha_p} \in D$ and $v_{\alpha_p}$ is on $C_d$. So we shall assume that $u_s \in A_d$. If $v_{\alpha_p}$ is on $C_d$, then we are done. If not, then $w_{\alpha_p}$ is on $C_d$ and we simply select $D' = D - \{v_{\alpha_p}\} \cup \{w_{\alpha_p}\}$ and now $D'$ is such that $D' \cap V(C_i) \ne \emptyset$ for each $i\in [k]$. 

Let $D$ be any set of vertices that contains exactly one vertex from each edge in $M$ such that $D\cap V(C_i) \ne \emptyset$ for $i \in [k]$. We claim that $D$ is a power dominating set of $G$. Note that each vertex of the form $x_j^i$ for $i \in [k]$ and $j \equiv 1\pmod{3}$ is dominated by $D$. Fix $i \in [k]$ and reindex the vertices of $C_i$ if need be so that $x_1^i \in D$. Thus, $x_2^i$ and $x_{n_i}^i$ are dominated. It follows that $x_3^i$ is observed. Furthermore, $x_4^i$ is dominated by $D$ from which it follows that $x_5^i$ will be observed. Continuing this argument, we see that all vertices of $C_i$ will be observed and $D$ is in fact a power dominating set of $G$. Furthermore, we can partition the vertices of $G$ as $V(G) = X_1\cup \cdots \cup X_t$ where the following is true. For each $i \in [t]$, $w_i$ and $v_i$ are in $X_i$. Moreover, if we assume $w_i = x_j^a$ and $v_i = x_{\ell}^b$, then $x_{j+1}^a, x_{j-1}^a, x_{\ell -1}^b, x_{\ell+1}^b$ are also in $X_i$. Note that $|D \cap X_i|=1$ for all $i \in [t]$ and it follows that $|D| = \frac{n}{6}$. 

Finally, to see that the bound is sharp, consider the graph $G$ depicted in Figure~\ref{fig:sharp}. One can easily verify that $\gamma_P(G)>1$ and $\{u,v\}$ is a power dominating set of $G$. 
\end{proof}

\begin{figure}[h]
\begin{center}
\begin{tikzpicture}[]
\tikzstyle{vertex}=[circle, draw, inner sep=0pt, minimum size=6pt]
\tikzset{vertexStyle/.append style={rectangle}}
	\vertex (1) at (0,0) [scale=.75] {};
	\vertex (2) at (0,1) [ scale=.75] {};
	\vertex (3) at (1, 2) [ scale=.75, label=above:$u$] {};
	\vertex (4) at (2, 1) [scale=.75] {};
	\vertex (5) at (2, 0) [scale=.75] {};
	\vertex (6) at (1, -1) [scale=.75] {};
	\vertex (7) at (3, 0) [scale=.75] {};
	\vertex (8) at (3, 1) [scale=.75] {};
	\vertex (9) at (4, 2) [scale=.75] {};
	\vertex (10) at (5, 1) [scale=.75] {};
	\vertex (11) at (5,0) [scale=.75] {};
	\vertex (12) at (4, -1) [scale=.75, label=below:$v$] {};

	\path
		(1) edge (2)
		(2) edge (3)
		(3) edge (4)
		(4) edge (5)
		(5) edge (6)
		(1) edge (6)
		(2) edge (4)
		(1) edge (5)
		(3) edge (9)
		(7) edge (8)
		(8) edge (9)
		(9) edge (10)
		(10) edge (11)
		(11) edge (12)
		(7) edge (12)
		(6) edge (12)
		(7) edge (11)
		(8) edge (10)

	;
\end{tikzpicture}
\end{center}
\caption{An example where $\gamma_P(G) = \frac{|V(G)|}{6}$}
\label{fig:sharp}
\end{figure}
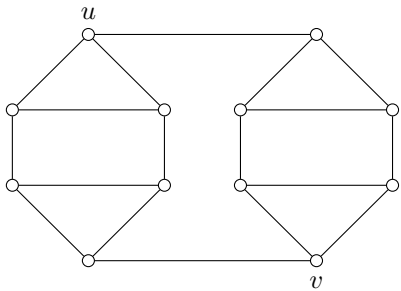

\begin{corollary}\label{cor:multigraph} If $G$ is a $2$-edge-connected claw-free diamond-free  cubic multigraph of order $n$ containing exactly one pair of vertices $\{u,v\}$ where there are two edges between $u$ and $v$, then $\gamma_P(G) \le (n-2)/6$.
\end{corollary}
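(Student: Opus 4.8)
The plan is to adapt the proof of Theorem~\ref{thm:2connected} to the multigraph $G$, using the two endpoints $u,v$ of the double edge as the pair of vertices that is ``saved.'' Write $u'$ for the neighbor of $u$ other than $v$, and $v'$ for the neighbor of $v$ other than $u$. First I would record the local structure forced by the hypotheses. Since $G$ is $2$-edge-connected we must have $u'\neq v'$ (otherwise the single remaining edge at $u'=v'$ would be a bridge separating $\{u,v,u'\}$ from the rest). Since $G$ is claw-free and diamond-free, every vertex of $V(G)\setminus\{u,v\}$ has three distinct neighbors with exactly one edge among them, hence lies in exactly one triangle; these triangles therefore partition $V(G)\setminus\{u,v\}$ into $(n-2)/3$ vertex-disjoint triangles, and in particular $u'$ and $v'$ each lie in a triangle containing neither $u$ nor $v$.

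Next I would produce a favorable $2$-factor. Applying Theorem~\ref{thm:multigaph-1factor} to the edge $uu'$ (legitimate since $G$ is a $2$-edge-connected cubic multigraph and $n$ is even), I obtain a $2$-factor $\mathcal{C}$ of $G$ containing $uu'$. A short degree count shows that any such $\mathcal{C}$ must also contain one copy of the double edge $uv$ together with $vv'$, so that $u$ and $v$ lie on a common cycle of $\mathcal{C}$ as $\cdots u'-u-v-v'\cdots$, with the second copy of $uv$ being the unique edge at $u$ (equivalently at $v$) left out of $\mathcal{C}$. As in Theorem~\ref{thm:2connected}, $\mathcal{C}$ traverses each triangle through all three of its corners, entering and leaving through two of the three external edges, and the third (unused) external edge of each triangle joins its free corner to the free corner of another triangle. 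No free corner is adjacent to $u$ or $v$, whose only neighbors are $u',v'$ and each other, while the external edges at $u'$ and $v'$ are used by $\mathcal{C}$. Hence the unused external edges form a perfect matching $M$ on the $(n-2)/3$ free corners; in particular $6\mid (n-2)$ and $|M|=(n-2)/6$.

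Finally I would choose $D$ by selecting one endpoint from each edge of $M$ so that $D$ meets every cycle of $\mathcal{C}$, exactly as in the cycle-hitting argument in the proof of Theorem~\ref{thm:2connected}; this gives $|D|=(n-2)/6$. Every free corner is then dominated by $D$, so on each cycle of $\mathcal{C}$ the propagation sweep of Theorem~\ref{thm:2connected} observes every vertex; on the cycle carrying $u$ and $v$ the sweep is simply run in both directions from the vertex of $D$ that it contains, reaching $u$ from $u'$ and $v$ from $v'$. Thus all of $V(G)\setminus\{u,v\}$ is observed, after which $u'$ forces $u$ and $v'$ forces $v$ across the single edges $uu'$ and $vv'$. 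The one genuine obstacle is that propagation can never cross the double edge $uv$, since $u$ (resp.\ $v$) always has two unobserved parallel edges to $v$ (resp.\ $u$); the construction sidesteps this by routing $\mathcal{C}$ through $uu'$ rather than as the $2$-cycle on $\{u,v\}$, so that $u$ and $v$ are observed from their triangle sides and the double edge is never used for forcing.
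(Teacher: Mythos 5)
There is a genuine gap at the construction of the $2$-factor. You apply Theorem~\ref{thm:multigaph-1factor} directly to the multigraph $G$ to get a $2$-factor $\mathcal{C}$ containing $uu'$, and then assert ``as in Theorem~\ref{thm:2connected}, $\mathcal{C}$ traverses each triangle through all three of its corners, entering and leaving through two of the three external edges.'' But that property is not enjoyed by an arbitrary $2$-factor of $G$: in a claw-free diamond-free cubic graph the restriction of a $2$-factor to a triangle is either a path on all three corners (the configuration you want) or the triangle itself as a $3$-cycle component, and Theorem~\ref{thm:multigaph-1factor} gives you no control over which occurs away from $u$ and $v$. (Your degree count only pins down the edges at $u$ and $v$; it forces the triangles of $u'$ and $v'$ to be traversed, but a distant triangle can perfectly well appear as a $3$-cycle component.) In Theorem~\ref{thm:2connected} the ``every triangle is traversed, every cycle has length at least $6$'' property is not deduced from an arbitrary $2$-factor of $G$; it is built in by taking a $2$-factor of the contracted cubic multigraph $H$ of Corollary~\ref{cor:2-factor} and lifting it, so that each cycle of $H$ expands to a cycle of $G$ through whole triangles. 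If some triangle of $G$ is a $3$-cycle component of your $\mathcal{C}$, the unused external edges no longer form a perfect matching on one free corner per triangle, the count $|M|=(n-2)/6$ fails, and the propagation collapses (a $3$-cycle component containing only one observed vertex can never be finished, since that vertex has two unobserved neighbors).

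The repair is exactly the detour the paper takes: delete $u$ and $v$, add the edge $u'v'$, and verify that the resulting simple graph $G'$ is still $2$-edge-connected, claw-free and diamond-free cubic (this needs a small argument that the new edge $u'v'$ creates no diamond, using that the triangles of $u'$ and $v'$ are disjoint — a check your write-up skips because you never form $G'$). Then Corollary~\ref{cor:2-factor} gives the underlying multigraph $H$ of $G'$ with $u'v'\in E(H)$, Theorem~\ref{thm:multigaph-1factor} applied to $H$ and the edge $u'v'$ gives a $2$-factor of $H$ through $u'v'$, and lifting it yields a $2$-factor of $G'$ that traverses every triangle, has all cycles of length at least $6$, and uses the edge $u'v'$; re-expanding $u'v'$ into the path $u'-u-v-v'$ recovers a $2$-factor of $G$ with the structure you wanted. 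From that point on your argument is sound and matches the paper's: the cycle-hitting choice of $D$ from $M$, the observation that forcing can never cross the double edge, and the fact that $u$ and $v$ are each observed from their simple-edge side by $u'$ and $v'$ respectively.
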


\begin{proof}
Suppose $G$ contains vertices $u$ and $v$ where there are two edges between $u$ and $v$. Let $w$ be the neighbor of $u$ different from $v$ and let $z$ be the neighbor of $v$ different from $u$. Note that $w\ne z$ for otherwise $G$ is not $2$-edge-connected. Let $G'$ be the graph obtained from $G$ by removing $u$ and $v$ and adding an edge between $w$ and $z$. Since $G$ is claw-free, $w$ is on a triangle in $G$, say $wrs$. If $z = r$, then $s$ is a cut-vertex of $G$ which is a contradiction. Therefore, $z \not\in \{r, s\}$. On the other hand, $z$ is on a triangle in $G$, say $zab$. If $a=r$, then $s=b$ and $wrsz$ induces a diamond in $G$, another contradiction. Therefore, $\{a, b\} \cap \{r, s\} = \emptyset$. We may conclude that $G'$ is a $2$-edge-connected claw-free diamond-free cubic graph. By Corollary~\ref{cor:2-factor}, we may assume $G'$ can be obtained from a $2$-edge-connected cubic multigraph $H$ by replacing each vertex of $H$ with a triangle. Note that $wz$ is an edge in $H$ as it is not on a triangle in $G'$. Since $H$ is a cubic multigraph, $H$ has an even number of vertices. By Theorem~\ref{thm:multigaph-1factor}, $H$ contains a $2$-factor $\mathcal{C'} = C_1' \cup \cdots \cup C_k'$ containing $wz$. Reindexing if necessary, we may assume $wz$ is on $C_1'$. We let $\mathcal{C}= C_1 \cup \cdots \cup C_k$ be the $2$-factor in $G'$ obtained from $\mathcal{C}'$ where $C_i$ is obtained from $C_i'$ by replacing each vertex with a triangle. We may enumerate the vertices of $C_i$ as $C_i = x^i_1\dots x^i_{n_i}$ such that $3 \mid n_i$ and $G[\{x^i_j, x^i_{j+1}, x^i_{j-1}\}] = K_3$ for all $j \equiv 1\pmod{3}$. Note that there exists a perfect matching between the vertices of 
\[\bigcup_{i\in[k]} \{x^i_1, x^i_4, x^i_7, \dots, x^i_{n_i-2}\}.\] 
Let $M = \{w_1v_1, \dots , w_tv_t\}$ be such a perfect matching. As in the proof of Theorem~\ref{thm:2connected}, we can find a set $D$ of vertices  that contains exactly one vertex from each edge in $M$ such that $D\cap V(C_i) \ne \emptyset$ for $i \in [k]$.  We claim that $D$ is also a power dominating set of $G$. Note that since $wrs$ is a triangle in $G'$ and $zab$ is a triangle in $G'$ where $\{a, b\} \cap \{r, s\} = \emptyset$, we may assume $z = x_{n_1-1}^1$ and $w = x_{n_1}^1$. Therefore, $\mathcal{C''} = C_1'' \cup \cdots \cup C_k''$ where $C_i''= C_i$ for $2 \le i \le k$ and $C_1'' = x_1^1x_2^1\dots x_{n_1-1}^1vux_{n_1}^1$ is a $2$-factor of $G$ where $D \cap V(C_i'') \ne \emptyset$ for all $i \in [k]$. One can easily verify that all vertices of $G$ are observed. It is important to note that $D\cap \{u,v,w,z\} = \emptyset$ and $w$ will eventually observe $u$ and $z$ will eventually observe $v$ in $G$. Thus, $D$ is a power dominating set of $G$ of cardinality $|V(G')|/6 = (n-2)/6$.
\end{proof}

In order to generalize the above result to all cubic graphs, we use the following terminology found in \cite{zero-sum}.
Let $G$ be a 2-edge-connected graph with $2 \leq \delta(G) < \Delta(G) = 3$   and  vertex $v$ of degree 2. Then the graph that results by {\it smoothing $v$}, denoted $s_G(v)$, is the multigraph that is produced by  replacing $v$ and its two incident edges with an edge between the neighbors of $v$.   We note that $s_G(v)$ is a 2-edge-connected cubic multigraph.

Let $G$ be a  graph with non-empty bridge set $B(G)$. For each edge  $e$ in  $B(G)$, there exist distinct components $H_i$ and $H_j$ of $G - B(G)$ such that $e$ is incident to some vertex in $V(H_i)$ and some vertex in $V(H_j)$. In such a case, we will say that {\it $e$ is incident to  $H_i$ and $H_j$}. 

Let $G$ be a connected graph with $\vert B(G) \vert = b \geq 0$,  and let $H_0, H_1, ..., H_b$ be the components of $G - B(G)$.   Then  $T_G$ shall denote the simple graph with vertex set $V(T_G) = \{h_0, h_1, h_2, ..., h_b\}$ and edge set  $E(T_G)= \{h_ih_j \mid $ some edge $e \in B(G)$ is incident to $H_i$ and $H_j\}$. We observe that $T_G$ is a tree, and that  $G$ has no bridges if and only if  $T_G$ is isomorphic to $K_1$.
Furthermore, $G - B(G)$  is a graph with $b + 1$ components $H_0, H_1, H_2,  ..., H_b$ such that each component is either
\vskip 5pt
\indent Type I:  isomorphic to $K_1$, or
\vskip 5pt
\indent Type II:  isomorphic to the $m$-cycle $C_m$ for some $m \geq 2$, or
\vskip 5pt
\indent Type III: isomorphic to some 2-edge-connected graph $H$ with $\Delta(H) = 3$.
\vskip 5pt

\noindent We note that if $H_i$ is of Type I or Type II, then $b > 0$ and $h_i$ is an interior vertex of $T_G$.  Otherwise, if $H_i$ is of Type III, then  $h_i$ is a leaf of $T_G$ if and only if either $b = 0$ or $H_i$ has precisely one vertex of degree 2.   To illustrate, we observe that for graph $G_0$ of Figure 2, $T_{G_0}$ is isomorphic to $K_{1,3}$ and $G_0 - B(G_0)$ has one component of Type II and three components of Type III. 

\begin{figure}[h]
\begin{center}
\begin{tikzpicture}[]
\tikzstyle{vertex}=[circle, draw, inner sep=0pt, minimum size=6pt]
\tikzset{vertexStyle/.append style={rectangle}}
	\vertex (1) at (0.75,0.75) [fill, scale=.75] {};
	\vertex (2) at (2.25,0.75) [fill, scale=.75] {};
	\vertex (3) at (1.5,1.5) [fill, scale=.75]{};
	\vertex (4) at (1.5, 2.5) [fill, scale=.75]{};
	\vertex (5) at (2.25, 3.5) [fill, scale=.75]{};
	\vertex (6) at (.75, 3.5) [fill, scale=.75]{};
	\vertex (7) at (-.25, .75) [fill, scale=.75]{};
	\vertex (8) at (-1.25, 1.5) [fill, scale=.75]{};
	\vertex (9) at (-1.25, 0) [fill, scale=.75]{};
	\vertex (10) at (3.25, .75) [fill, scale=.75]{};
	\vertex (11) at (4.25, 1.5) [fill, scale=.75]{};
	\vertex (12) at (4.25, 0) [fill, scale=.75]{};

	\path
		(1) edge (2)
		(2) edge (3)
		(1) edge (3)
		(3) edge (4)
		(4) edge (5)
		(4) edge (6)
		(5) edge (6)
		(5) edge[bend right=35] (6)
		(1) edge (7)
		(7) edge (8)
		(7) edge (9)
		(8) edge (9)
		(8) edge[bend right=35] (9)
		(2) edge (10)
		(10) edge (11)
		(10) edge (12)
		(11) edge (12)
		(11) edge[bend left=35] (12)

	;
\end{tikzpicture}
\end{center}
\caption{ The graph $G_0$ }
\end{figure}
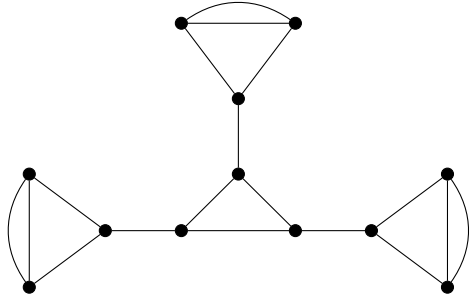

\medskip
\noindent \textbf{Theorem~\ref{thm:uppern3}} \emph{If $G$ is a claw-free diamond-free cubic graph of order $n$, then $\gamma_P(G) \le n/6$ and this bound is sharp.}

\begin{proof} We may assume that $G$ is connected since the power domination number of a disconnected graph is the sum of the power domination numbers of each component. We let $B(G)$ represent the set of bridges in $G$ and let $H_0, H_1, \dots, H_b$ be the components of $G - B(G)$. Note that since $G$ is claw-free, no $H_i$ is of Type I. Moreover, if $H_i$ is of Type II, then $H_i = K_3$. We consider the tree $T_G$ with $V(T_G) = \{h_0, h_1, \dots, h_b\}$ where $h_ih_j \in E(T_G)$ if and only if some $e \in B(G)$ is incident to $H_i$ and $H_j$. Choose a path $P$ of length $d={\rm{diam}}(T_G)$ and let $u_0$ be a leaf of $P$. We root $T_G$ at $u_0$ and define $A_i = \{v \in V(T_G) \mid d_{T_G}(u_0, v) = i\}$ where $A_0 = \{u_0\}$. Note that for each $a \in A_i$, there exists exactly one neighbor of $a$ in $A_{i-1}$. We also know that every vertex of $A_d$ is a leaf in $T_G$. We construct a power dominating set as follows. 

 Let $u_i\in V(T_G)$ be a leaf. Note that $H_i$ is of Type III and there exists only one vertex of degree $2$, call it $v$. It follows that  $s_{H_i}(v)$ is a claw-free diamond-free cubic multigraph with only one pair of vertices, namely $\{w, t\}$ where $N_{H_i}(v) = \{w, t\}$, that have two edges between them. Therefore, by Corollary~\ref{cor:multigraph}, we can choose a power dominating set $S_i'$ for $s_{H_i}(v)$ of cardinality at most $(n(H_i)-2)/6$. Furthermore, as noted at the end of the proof of Corollary~\ref{cor:multigraph}, $w$ will eventually be observed by its only neighbor in $s_{H_i}(v)$ other than $t$ and $t$ will eventually be observed by its only neighbor in $s_{H_i}(v)$ other than $w$. Thus, $w$ will observe $v$ in $H_i$ and $S_i'$ is a power dominating set of $H_i$.  Note that we have chosen a set of vertices from each $H_i$ where $u_i \in A_d$. 
 
 Next, suppose $u_i\in A_t$ for $1 \le t \le d-1$ and assume $H_i$ is of Type III. Let $X = \{x_1, \dots, x_r\}$ be all vertices of $H_i$ of degree $2$ such that   $x_1$ is the only vertex that has a neighbor in $H_j$ where $u_j \in A_{t-1}$. Note that $r \ge 2$. 
 
 \begin{enumerate}
 \item[Case 1] Assume first that $r$ is even. Let $\overline{H_i}$ be the graph obtained from $H_i$ by adding the edges $x_jx_{j+1}$ for $1 \le j \le r$ where $j$ is odd. Note that $\overline{H_i}$ is a cubic graph. We claim that $\overline{H_i}$ is claw-free and diamond-free. Suppose to the contrary that  $\overline{H_i}$ contains a claw. In particular, assume $\{u, v, w, x\}$ induces a claw where $u, v$, and $w$ are the leaves of the claw. Since $G$ is claw-free, it must be that some edge of this claw was added to form $\overline{H_i}$. Therefore, assume that $xu \not\in E(G)$. Thus, $x$ has a neighbor in $H_{\ell}$ for some $\ell \ne i$, call it $t$. But now $\{v, w, t, x\}$ induces a claw in $G$ which cannot be. Thus, $\overline{H_i}$ is claw-free.

 Now we show that $\overline{H_i}$ is diamond-free. Suppose to the contrary that $\{x, v, w, t\}$ induces a diamond in $\overline{H_i}$, call it $X$. We shall assume $xt \not\in E(X)$ and therefore $xt \not\in E(G)$. Since $G$ is diamond-free, some edge exists in $X$ that does not exist in $G$. Suppose first that $xv \not\in E(G)$.  This implies $x$ has a neighbor in $H_{\ell}$, call it $s$,  where $\ell \ne i$ and $x$ has a neighbor $z \not\in \{s, w\}$ such that $z\in H_i$ or $z \in H_{\ell'}$ where $\ell'\not\in \{\ell, i\}$ . This implies $N_G[x]$ induces a claw in $G$ which is a contradiction. Therefore, we shall assume $wv \not\in E(G)$. Thus, $w$ has a neighbor in some $H_{\ell}$ where $\ell\ne i$ and it follows that $N_G[w]$ induces a claw, another contradiction. Hence, $\overline{H_i}$ is diamond-free. 
 
 As in the proof of Theorem~\ref{thm:2connected}, we can find a $2$-factor $\mathcal{C}=C_1 \cup \cdots \cup C_k$ of $\overline{H_i}$ where $|V(C_j)|\ge6$ for $j\in [k]$. We may enumerate the vertices of $C_j$ as $C_j = x_1^j\dots x_{n_j}^j$ such that $3 \mid n_j$ and $G[\{x_s^j, x_{s+1}^j, x_{s-1}^j\}]=K_3$ for all $s\equiv 1\pmod{3}$. Moreover, we can find a power dominating set $S_i'$ of cardinality $|\overline{H_i}|/6$ such that $D\cap V(C_j) \ne \emptyset$ for each $j \in [k]$.

 \item[Case 2] Assume $r$ is odd. Let $H_i'$ be the graph obtained from $H_i$ by adding the edges $x_jx_{j+1}$ for $2 \le j \le r$ where $j$ is even and let $\overline{H_i} = s_{H_i'}(x_1)$. Thus, $\overline{H_i}$ is a $2$-edge-connected cubic multigraph with exactly one pair of vertices that have two edges between them. In fact, if we let $x$ and $y$ be the neighbors of $x_1$ in $H_i$, then $xy \in E(G)$ for otherwise $N_G[x_1]$ is a claw. It follows that $x$ and $y$ have exactly two edges between them in $\overline{H_i}$. 
 
 As in Case 1, $\overline{H_i}$ is claw-free.  Next, we show that $\overline{H_i}$ is in fact diamond-free. Suppose to the contrary that $\{u, v, w, t\}$ induces a diamond in $\overline{H_i}$, call it $X$. We may assume $ut \not\in E(\overline{H_i})$ and therefore $ut \not\in E(G)$. Note that $\{u,v,w,t\} \cap \{x, y\} = \emptyset$ since $x$ and $y$ share two edges between them. Now similar arguments to that used in Case 1 show that $X$ cannot exist unless $G$ contains a claw. Thus,  $\overline{H_i}$ is diamond-free. 
 
As in the proof of Corollary~\ref{cor:multigraph}, we can find a $2$-factor $\mathcal{C}=C_1 \cup \cdots \cup C_k$ of $\overline{H_i}$ where $|V(C_j)|\ge6$ for $j\in [k]$. We may enumerate the vertices of $C_j$ as $C_j = x_1^j\dots x_{n_j}^j$ such that $3 \mid n_j$ for $2 \le j \le k$, $n_1\equiv 2\pmod{3}$, $G[\{x_s^j, x_{s+1}^j, x_{s-1}^j\}]=K_3$ for all $s\equiv 1\pmod{3}$ where $s\ne x_{n_1-1}^1$, and $C_1 = x_1^1x_2^1\dots x_{n_1-3}^1xyx_{n_1}^1$. Moreover, we can find a power dominating set $S_i'$ of $\overline{H_i}$ of cardinality $(|\overline{H_i}|-2)/6 $ such that $D \cap V(C_j) \ne \emptyset$ for each $j \in [k]$.

 \end{enumerate}
 
 All we need to do now is prove that $S' = \cup_i S_i'$ is a power dominating set of $G$. Let $u_i \in A_t$ for some $0 \le t \le d$. Note that if $t \in \{0, d\}$, we have already shown that all vertices of $H_i$ are observed. Assume first that $t=d-1$. If $H_i$ is of Type II, then $H_i = K_3$. We write $V(H_i) = \{x_1, x_2, x_3\}$ where $x_1$ has a neighbor $w$ in $H_{\ell}$ where $u_{\ell} \in A_{d-2}$. It follows that $x_2$ has a neighbor $y$ in $H_{\ell'}$ where $u_{\ell'} \in A_d$. Since $y$ is observed, $x_2$ is the only unobserved neighbor of $y$. Thus, $x_2$ is observed. Similarly, $x_3$ is observed. It follows that $x_1$ is also observed. Therefore, we may assume that $H_i$ is of Type III. Let $X = \{x_1, \dots, x_r\}$ be all the vertices of $H_i$ of degree $2$ where $x_1$ has a neighbor in $H_{\ell}$ where $u_{\ell}\in A_{d-2}$. 
 
 We first consider when $r$ is even. Define $\overline{H_i}$ as above in Case 1 with $2$-factor $\mathcal{C}= C_1 \cup \cdots \cup C_k$ of $\overline{H_i}$ where $|V(C_j)| \ge 6$ for $j \in [k]$ and $S' \cap V(C_j) \ne \emptyset$. Moreover, enumerate the vertices of $C_j$ as $C_j = x_1^j\dots x_{n_j}^j$ such that $3 \mid n_j$ and $G[\{x_{\ell}^j, x_{\ell+1}^j, x_{\ell-1}^j\}]=K_3$ for all $\ell \equiv 1\pmod{3}$. Note that $\mathcal{C}$ is still a $2$-factor in $H_i$. Furthermore, each $x_{\alpha}$ where $2 \le \alpha \le r$ is observed by some vertex in $H_p$ where $u_p \in A_d$. Since $S' \cap V(C_j) \ne \emptyset$ for each $j \in [k]$, it follows that all vertices of $H_i$ except possibly $x_1$ are observed. We may assume $x_1 = x_{a}^j$ for some $a\equiv 1\pmod{3}$ and $1 \le j \le k$. Since all vertices in $N_{H_i}(x_{\alpha}^j)$ are observed, $x_a^j$ will also be observed.  Thus, all vertices of $H_i$ are observed. 
 
 Finally, suppose $r$ is odd. Define $\overline{H_i}$ as above in Case 2 with $2$-factor $\mathcal{C}=C_1 \cup \cdots \cup C_k$ of $\overline{H_i}$ where $|V(C_j)|\ge6$ for $j\in [k]$ and $S' \cap V(C_j) \ne \emptyset$.  For $2\le j \le k$, enumerate the vertices of $C_j$ as $C_j = x_1^j\dots x_{n_j}^j$ such that $3 \mid n_j$ and $G[\{x_s^j, x_{s+1}^j, x_{s-1}^j\}]=K_3$ for all $s\equiv 1\pmod{3}$. Moreover, write  $C_1 = x_1^1x_2^1\dots x_{n_1-3}^1xyx_{n_1}^1$ such that $x$ and $y$ share two edges between them and $G[\{x_s^1, x_{s+1}^1, x_{s-1}^1\}]=K_3$ for all $s \equiv 1\pmod{3}$. Note that the $2$-factor $\mathcal{C'} = C_1' \cup \cdots \cup C_k'$ defined as $C_j' = C_j$ for $2 \le j \le k$ and $C_1' = x_1^1x_2^1\dots x_{n_1-3}^1xx_1yx_{n_1}^1$ is a $2$-factor of $H_i$ where $S' \cap V(C_j') \ne \emptyset$ for each $j \in [k]$. Each $x_{\alpha}$ where $2 \le \alpha \le r$ is observed by some vertex in $H_p$ where $u_p \in A_d$. Moreover, $S' \cap V(C_j)\ne \emptyset$ for each $j \in [k]$, $x$ will eventually be observed by its only neighbor in $\overline{H_i}$ other than $y$, and $y$ will eventually be observed by its only neighbor in $\overline{H_i}$ other than $x$. Thus, $x$ will observe $x_1$ in $H_i$ and  all vertices of $H_i$ will be observed.

 Therefore, we have shown that for any $u_i \in A_{d-1}$, all vertices of $H_i$ are observed. Repeating these same arguments will show that all vertices of $G$ will eventually be observed as we work our way up the tree $T_G$. It follows that $S'$ is a power dominating set of $G$ where $|S'| \le \frac{n}{6}$. The sharpness of the bound is provided in Theorem~\ref{thm:2connected}.

\end{proof}

\section{Cartesian products}
\label{cartesianproducts}
We now turn our attention to the Cartesian products of two connected graphs $G$ and $H$. Note that the results presented in this section may be extended to disconnected graphs since the power domination of a disconnected graph is the sum of the power domination numbers of each component. First, we will consider lower bounds for $\gamma_P(G\Box H)$. Note that the following was shown by Koh and Soh in \cite{pdprod}. 

\begin{lemma}\cite{pdprod}\label{thm:Cartlower} For any connected graphs $G$ and $H$, $\gamma_P(G\Box H) \ge \gamma_P(H)$.
\end{lemma}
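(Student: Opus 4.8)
The plan is to project a minimum power dominating set of $G\Box H$ onto the second factor and show that the projection already power dominates $H$. Concretely, let $S$ be a $\gamma_P(G\Box H)$-set and let $\pi_H\colon V(G\Box H)\to V(H)$ be the projection $\pi_H((g,h))=h$. Since $|\pi_H(S)|\le|S|=\gamma_P(G\Box H)$, it suffices to prove that $\pi_H(S)$ is a power dominating set of $H$, as this immediately yields $\gamma_P(H)\le|\pi_H(S)|\le\gamma_P(G\Box H)$.

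To carry out the comparison I would fix the \emph{limiting} observed set of the $H$-process started from $\pi_H(S)$, call it $\mathcal{O}^\ast\subseteq V(H)$, i.e.\ the set obtained from $N_H[\pi_H(S)]$ by applying the propagation rule until no further vertex is added. The goal then becomes $\mathcal{O}^\ast=V(H)$, and the main technical claim is the invariant that every vertex observed by the product process started from $S$ has its $H$-coordinate in $\mathcal{O}^\ast$. Since $S$ power dominates $G\Box H$, this invariant forces $\pi_H(V(G\Box H))=V(H)\subseteq\mathcal{O}^\ast$, which is exactly what is needed.

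I would establish the invariant by induction on the steps of the product process. For the base case (the domination step), if $(g,h)\in N_{G\Box H}[S]$ then either $(g,h)\in S$, or $(g,h)$ has a neighbor in $S$ along an $H$-edge, giving a neighbor of $h$ in $\pi_H(S)$, or along a $G$-edge, giving $h\in\pi_H(S)$; in every case $h\in N_H[\pi_H(S)]\subseteq\mathcal{O}^\ast$. For the inductive step, suppose an unobserved vertex $w$ is forced by an already-observed vertex $(g,h)$, i.e.\ $w$ is the unique unobserved neighbor of $(g,h)$. If the force is along a $G$-edge, then $w=(g',h)$ and its $H$-coordinate $h$ already lies in $\mathcal{O}^\ast$ by the induction hypothesis, so nothing new is required. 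If the force is along an $H$-edge, then $w=(g,h'')$ with $h''\sim_H h$, and the forcing condition guarantees that every $H$-neighbor of $(g,h)$ except $(g,h'')$ is already observed; by the induction hypothesis this yields $h\in\mathcal{O}^\ast$ together with $N_H(h)\setminus\{h''\}\subseteq\mathcal{O}^\ast$.

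The crux, and the step I expect to be the main obstacle, is precisely this last case: the product process does \emph{not} descend to a legal $H$-process step by step, since a single $G$-direction force can reveal product vertices whose $H$-coordinate the genuine $H$-process has not yet reached. I would resolve this by exploiting that $\mathcal{O}^\ast$, being a limiting set, is \emph{closed} under the propagation rule in $H$. Indeed, having shown $h\in\mathcal{O}^\ast$ and that $h''$ is the only neighbor of $h$ that could lie outside $\mathcal{O}^\ast$, if $h''\notin\mathcal{O}^\ast$ then $h$ would be an observed vertex of the $H$-process whose unique unobserved neighbor is $h''$, contradicting the closure of $\mathcal{O}^\ast$. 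Hence $h''\in\mathcal{O}^\ast$, the invariant is preserved, and the inequality follows. I would also remark that connectivity of $G$ and $H$ plays no essential role here beyond the standing convention for power domination.
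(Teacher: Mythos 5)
Your proof is correct: the projection of a minimum power dominating set of $G\Box H$ onto $H$, together with the observation that the limiting observed set $\mathcal{O}^\ast$ of the $H$-process is closed under propagation (which handles the delicate $H$-edge forcing case), gives a sound induction showing $\pi_H(S)$ power dominates $H$. The paper itself states this lemma only as a citation to Koh and Soh and includes no proof, so there is nothing to compare against; your argument is the standard projection proof and can stand on its own.
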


In \cite{pdprod}, it was also claimed that $\gamma_P(G\Box T) \ge \gamma_P(G)\gamma_P(T)$ where $T$ is any tree. The proof provided relies on the following. A {\it spider} is a tree with at most one vertex having degree $3$ or more. The {\it spider number} of a tree $T$, denoted ${\rm sp}(T)$, is the minimum number of subsets into which $V(T)$ can be partitioned so that each subset induces a spider. Such a partition is referred to as a {\it spider partition} and each set of the partition a {\it spider subset}. The following was proven by Haynes et al. \cite{electricgrids}.

\begin{theorem}\cite{electricgrids}\label{thm:spider} For any tree $T$, $\gamma_P(T) = {\rm sp}(T)$.
\end{theorem}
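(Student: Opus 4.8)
The plan is to prove the two inequalities $\gamma_P(T)\le {\rm sp}(T)$ and ${\rm sp}(T)\le\gamma_P(T)$ separately, building on the following characterization, which I would first establish as a lemma: a single vertex $v$ is a power dominating set of a tree if and only if that tree is a spider with center $v$. The forward direction is the key computation: after $v$ dominates $N[v]$, a neighbor $u$ of $v$ can propagate only when it has a unique unobserved neighbor, i.e. when $\deg(u)=2$, and iterating this shows every branch leaving $v$ must be a path, so the tree is a spider centered at $v$. Conversely, a PMU at the center of a spider dominates the center and all first leg-vertices, after which each leg propagates outward to its tip. This lemma is what ties single PMUs to single spiders.

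For the lower bound ${\rm sp}(T)\le\gamma_P(T)$, I would take a minimum power dominating set $S$, run the observation process, and convert it into a spider partition of size $|S|$. After the domination step the observed set is $B=N[S]$, and the propagation phase is ordinary zero forcing from $B$, whose forcing chains are vertex-disjoint paths that partition $V(T)$, each chain starting at a vertex of $B$. I would assign every $b\in B$ to one PMU $s$ with $b\in N[s]$, and let the region of $s$ be the union of the chains whose starting vertices are assigned to $s$. Since $s$ dominates all its neighbors it forces nothing, so its own chain is trivial; the region of $s$ is therefore $s$ together with the chains hanging off those neighbors of $s$ assigned to it. Using that $T$ is a tree, I would check that no two such chains meet and that each leg-vertex keeps degree $2$ inside the region (its off-leg neighbors lie at distance $\ge 2$ from $s$ and so are not assigned to $s$), so each region induces a spider centered at $s$. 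These $|S|$ spiders partition $V(T)$, giving ${\rm sp}(T)\le\gamma_P(T)$.

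For the upper bound $\gamma_P(T)\le {\rm sp}(T)$, I would induct on ${\rm sp}(T)$. Given an optimal spider partition $\{V_1,\dots,V_k\}$, contracting each part yields a tree $Q$ (no multi-edges, since two edges between parts would create a cycle), so a leaf $V_1$ of $Q$ is joined to the rest of $T$ by exactly one edge $xy$ with $x\in V_1$; hence $x$ is the only vertex of $V_1$ with a neighbor outside $V_1$. Applying the induction hypothesis to $T'=T-V_1$ (which has spider number $\le k-1$) yields a power dominating set $S'$ with $|S'|\le k-1$, and I would claim $S=S'\cup\{c_1\}$ works, where $c_1$ is the center of $V_1$. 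The verification runs in stages: every vertex of $V_1$ other than $x$ has no external neighbor, so $c_1$ propagates through $V_1$ at least up to and including $x$; with $x$ now observed, $y$ never has $x$ as an unobserved neighbor, so $S'$ observes all of $T'$ exactly as it does in isolation; finally, once $y$ is observed, $x$ has a unique unobserved neighbor along its leg and forces the remainder of that leg to its tip. Thus $S$ power dominates $T$ and $|S|\le k$.

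The main obstacle is precisely this last verification: unlike the lower bound, placing PMUs at the centers of an arbitrary spider partition can stall, since a leg-vertex may be blocked by an unobserved neighbor sitting in another part. The feature that rescues the argument is that $T$ is a tree, which forces the inter-part edges to form a tree and a leaf spider to have a single interface edge, allowing the propagation to be sequenced from the leaves inward with no circular dependency. I would take care to treat the degenerate cases ($x=c_1$, $x$ a leg-tip, or $V_1$ a path-spider in which ``center'' means an endpoint) and to confirm that $T'$ remains connected when $V_1$ is removed, or else to invoke additivity of $\gamma_P$ over components.
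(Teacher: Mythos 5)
The paper does not actually prove this statement; it is imported verbatim from Haynes et al.\ \cite{electricgrids} and used as a black box, so there is no in-paper proof to compare against. Your blind argument is, as far as I can check, correct, and it follows the standard route for this theorem: the characterization of single-vertex power dominating sets of trees as spider centers, a forcing-chain decomposition for ${\rm sp}(T)\le\gamma_P(T)$, and a peel-off-a-leaf-spider induction for $\gamma_P(T)\le{\rm sp}(T)$, with the quotient tree $Q$ supplying the acyclic ordering that prevents propagation from stalling. Two small points deserve tightening. First, in the lower bound, your stated reason that each leg-vertex keeps degree $2$ in its region (``off-leg neighbors lie at distance $\ge 2$ from $s$'') only rules out extra adjacencies to chain heads; it does not by itself exclude a $T$-edge between interior vertices of two different chains in the same region. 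The clean justification is acyclicity: the vertex $s$ together with its assigned chains already spans the region with a connected spider, so any additional induced edge would close a cycle in $T$. Second, you should make explicit that each $s\in S$ is assigned to itself in the head-assignment, so that every region contains its center, is nonempty, and is connected; otherwise a region consisting of chains hung off a PMU not in that region could be disconnected. With those two clarifications the proof is complete, including the degenerate cases you flag at the end.
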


Koh and Soh \cite{pdprod} used the above result to show $\gamma_P(G\Box T) \ge \gamma_P(G)\gamma_P(T)$. However, there is a hole in their proof, which we provide here in order to address the issue. 

\begin{theorem}\cite{pdprod}\label{thm:Cartwrong} For any graph $G$ and any tree $T$, $\gamma_P(G)\gamma_P(T) \le \gamma_P(G\Box T)$. 
\end{theorem}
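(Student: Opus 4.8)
The plan is to exploit the spider-partition characterization of the power domination number of a tree (Theorem~\ref{thm:spider}) together with the fiber lower bound of Lemma~\ref{thm:Cartlower}. We may assume $G$ is connected, since both sides of the inequality split additively over the components of $G$. Write $m = \gamma_P(T) = {\rm sp}(T)$ and fix a spider partition $V(T) = V_1 \sqcup \cdots \sqcup V_m$, so that each induced subgraph $T[V_i]$ is a spider, hence a connected tree. For a fixed minimum power dominating set $S$ of $G \Box T$, set $S_i = S \cap (V(G) \times V_i)$, so that $\gamma_P(G \Box T) = |S| = \sum_{i=1}^m |S_i|$.

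First I would try to show that $|S_i| \ge \gamma_P(G)$ for each $i$; summing this over the $m$ parts would immediately give $\gamma_P(G \Box T) \ge m\,\gamma_P(G) = \gamma_P(T)\gamma_P(G)$, as desired. The natural way to bound $|S_i|$ from below is to argue that $S_i$ is itself a power dominating set of the ``slab'' $G \Box T[V_i]$. Granting this, since $T[V_i]$ is a connected tree, the commutativity of the Cartesian product and Lemma~\ref{thm:Cartlower} would yield $|S_i| \ge \gamma_P\big(G \Box T[V_i]\big) = \gamma_P\big(T[V_i] \Box G\big) \ge \gamma_P(G)$, completing the argument.

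The hard part -- and, I suspect, exactly the place where the original proof has its hole -- is the claim that $S_i$ power dominates the slab $G \Box T[V_i]$. In the global process on $G \Box T$, a vertex of $V(G) \times V_i$ may first become observed because of the domination step or a forcing step performed by a vertex lying in a \emph{different} slab $V(G) \times V_j$, the two slabs being joined by edges arising from the $T$-edges between $V_i$ and $V_j$. Thus the trace of the global observation process on a single slab is in general not the observation process generated by $S_i$ inside that slab: a vertex the global process observes ``for free'' from outside may be unreachable from $S_i$ alone. Consequently $S_i$ need neither dominate nor correctly propagate within $G \Box T[V_i]$, and the per-slab bound $|S_i| \ge \gamma_P(G)$ can fail.

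To repair the argument one must prevent, or otherwise account for, this inter-slab forcing. One avenue is to choose the spider partition adaptively and to order the propagation so that each slab is processed using only its own initial set; another is to impose extra structure on $T$ that forces the tree-direction contribution to behave like ordinary domination rather than propagation. The latter is the route that succeeds: under the added hypotheses $\gamma_P(T_i) = \gamma(T_i)$ of Theorem~\ref{thm:treeprod}, the cross-slab forcing can be controlled, which is why the Vizing-like inequality is recovered there even though the unconditional statement resists this approach.
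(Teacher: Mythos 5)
Your instinct about where the argument breaks is exactly right, and you should trust it: the step you flag as ``the hard part'' is not merely hard, it is false, and it is precisely the hole this paper exposes in the Koh--Soh argument (the paper reproduces that argument only in order to debunk it). The claim that $S_i = S \cap (V(G)\times V_i)$ power dominates the slab $G \Box T[V_i]$ --- or even the weaker claim $|S_i| \ge \gamma_P(G)$ --- cannot be salvaged, because observation leaks across slabs exactly as you describe. The paper makes this concrete with the example of Figure~\ref{fig:Cart}: there ${\rm sp}(T)=4$ with spider part $V_2=\{v_7,v_8,v_{15}\}$, yet $S = V(G)\times\{v_3,v_5,v_9,v_{12},v_{16},v_{18}\}$ is a power dominating set of $G\Box T$ (it is already a dominating set) satisfying $S\cap(V(G)\times V_2)=\emptyset$. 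So $|S_i|$ can equal $0$ for some part, which rules out not only the per-slab bound but also your first proposed repair (reordering the propagation or choosing the spider partition adaptively): the failure is witnessed by a specific $S$, not by a bad ordering. Consequently Theorem~\ref{thm:Cartwrong} remains unproven here; the paper explicitly treats it as possibly true but open.

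Your prediction of how the special case is rescued is accurate in spirit but not in mechanism. Under the hypotheses $\gamma_P(T_i)=\gamma(T_i)$ of Theorem~\ref{thm:treeprod}, the paper does not control cross-slab forcing within a fiber argument. Instead it invokes Theorem~\ref{thm:treeequality} to replace spiders by closed neighborhoods of strong support vertices, partitions $V(T_1\Box T_2)$ into blocks $\mathcal{B}_{i,j}$ indexed by pairs of strong support vertices, and shows every block must intersect any power dominating set: the nine vertices formed by a strong support vertex and its two leaves in each factor contain a ``leaf times leaf'' vertex whose only neighbors inside the product each retain two unobserved neighbors, so it can never be forced from outside the block. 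That is a local obstruction (counting) argument rather than a slab/fiber argument via Lemma~\ref{thm:Cartlower}, and it is why the multiplication $\gamma_P(T_1)\gamma_P(T_2)$ goes through there while the unconditional statement resists your approach.
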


\begin{proof} Let $S$ be a power dominating set of $G \Box T$ where $m = \gamma_P(G)$ and $n = \gamma_P(T)$. By Theorem~\ref{thm:spider}, we can partition $V(T)$ into $n$ spider subsets $V_1, V_2, \dots, V_n$. Let $T_i = G[V_i]$, the subgraph induced by $V_i$, where $1 \le i \le n$. By Lemma~\ref{thm:Cartlower}, $\gamma_P(G\Box T_i) \ge \gamma_P(G) = m$. It follows that $|S \cap V(G\Box T_i)| \ge m$, and therefore 
\[\gamma_P(G\Box T) = |S| = \sum_{i=1}^n |S \cap V(G \Box T_i)| \ge mn.\]
\end{proof}

The issue in the above proof is the conclusion that $\gamma_P(G\Box T_i) \ge \gamma_P(G) = m$ implies that $|S \cap V(G\Box T_i)| \ge m$. We provide an example where this need not be the case. Consider the graph $G\Box T$ depicted in Figure~\ref{fig:Cart} where $G$ is the graph of order $7$. We claim that ${\rm sp}(T) = 4$. To see this, suppose there exists a spider partition of size $3$, say $V_1, V_2,$ and $V_3$. By definition, $V_1$, $V_2$, and $V_3$ must each induce a tree. Thus, we may assume that $v_3 \in V_1$, $v_{12} \in V_2$ and $v_{18} \in V_3$. Note that if $v_9$ is in $V_1$, then $V_1$ will contain two vertices of degree $3$, $v_3$ and $v_5$, since $V_1$ must be connected. So $v_9 \notin V_1$. A similar argument can be made to show $v_9 \notin V_3$. Therefore, $v_9 \in V_2$. In addition, $v_8 \notin V_2$; otherwise, $V_2$ contains two vertices, $v_{12}$ and $v_9$, of degree $3$. If $v_8 \in V_1$, then $v_7 \in V_1$. So $V_1$ now contains two vertices, $v_3$ and $v_5$, each of degree $3$. Hence, it must the case of $v_8 \in V_3$. However, $V_3$ now contains two vertices, namely $v_{16}$ and $v_{18}$, each with degree $3$. Therefore, no such spider partition exists. However, $V_1 = \{v_1, v_2, v_3, v_4, v_5, v_6\}$, $V_2 = \{v_7, v_8, v_{15}\}$, $V_3 = \{v_9, v_{10}, v_{11}, v_{12}, v_{13}, v_{14}\}$, and $V_4 = \{v_{16}, v_{17}, v_{18}, v_{19}, v_{20}\}$ is a spider partition. Furthermore,  $S = V(G) \times \{v_3, v_5, v_9, v_{12}, v_{16}, v_{18}\}$, illustrated by the black vertices, is a power dominating as it is a dominating set of $G\Box T$ yet $S \cap (V(G) \times V_2) = \emptyset$. 

It is quite possible that the statement of Theorem~\ref{thm:Cartwrong} is correct. In fact, we can show the Vizing-like inequality holds in specific situations. Recall that given a tree $T$, we refer to any vertex adjacent to a leaf as a {\it support vertex} and any vertex adjacent to at least two leaves as a {\it strong support vertex}. Note that strong support vertices can be used to bound the power domination number of a graph. For example, assume $x$ is a strong support vertex of $G$ with leaves $\ell_1$ and $\ell_2$ and $D$ is power dominating set of $G$. Let $X = \{x, \ell_1, \ell_2\}$ and assume $D \cap X = \emptyset$. Then the only vertex of $X$ adjacent to a vertex outside of $X$ is $x$. Even if $x$ is observed at some time step, $\ell_1$ and $\ell_2$ can never be observed by the rules of the propagation step which is a contradiction. In fact, if we let $v_s(G)$ denote the number of strong support vertices in a graph $G$, then $v_s(G) \leq \gamma_P(G)$ using similar arguments. We use the following result shown in \cite{electricgrids} and strong support vertices to show the Vizing-like inequality holds in specific situations.

\begin{figure}[h!]
\begin{center}
\begin{tikzpicture}[scale=.75]
\vertex (1) at (1, -1) [scale=.75pt, label=below:$u_1$]{};
\vertex (2) at (3, -1) [scale=.75pt, label=below:$u_2$]{};
\vertex (3) at (5, -1) [scale=.75pt, label=below:$u_3$]{};
\vertex (4) at (7, -1) [scale=.75pt, label=below:$u_4$]{};
\vertex (5) at (9, -1) [,scale=.75pt, label=below:$u_5$]{};
\vertex (6) at (11, -1) [scale=.75pt, label=below:$u_6$]{};
\vertex (7) at (13, -1) [scale=.75pt, label=below:$u_7$]{};

\vertex (a) at (-1,1) [scale=.75pt, label=left:$v_1$]{};
\vertex (b) at (-1,2) [scale=.75pt, label=left:$v_2$]{};
\vertex (c) at (-1,3) [scale=.75pt, label=left:$v_3$]{};
\vertex (d) at (-1,4) [scale=.75pt, label=left:$v_4$]{};
\vertex (e) at (-1,5) [scale=.75pt, label=left:$v_5$]{};
\vertex (f) at (-1,6) [scale=.75pt, label=left:$v_6$]{};
\vertex (g) at (-1,7) [scale=.75pt, label=left:$v_7$]{};
\vertex (h) at (-1,8) [scale=.75pt, label=left:$v_8$]{};
\vertex (i) at (-1,9) [scale=.75pt, label=left:$v_9$]{};
\vertex (j) at (-1,10) [scale=.75pt, label=left:$v_{10}$]{};
\vertex (k) at (-1,11) [scale=.75pt, label=left:$v_{11}$]{};
\vertex (l) at (-1,12) [scale=.75pt, label=left:$v_{12}$]{};
\vertex (m) at (-1,13) [scale=.75pt, label=left:$v_{13}$]{};
\vertex (n) at (-1,14) [scale=.75pt, label=left:$v_{14}$]{};
\vertex (o) at (-1,15) [scale=.75pt, label=left:$v_{15}$]{};
\vertex (p) at (-1,16) [scale=.75pt, label=left:$v_{16}$]{};
\vertex (q) at (-1,17) [scale=.75pt, label=left:$v_{17}$]{};
\vertex (r) at (-1,18) [scale=.75pt, label=left:$v_{18}$]{};
\vertex (s) at (-1,19) [scale=.75pt, label=left:$v_{19}$]{};
\vertex (t) at (-1,20) [scale=.75pt, label=left:$v_{20}$]{};

\vertex (1a) at (1, 1) [scale=.75pt, label=below:$$]{};
\vertex (2a) at (3, 1) [scale=.75pt, label=below:$$]{};
\vertex (3a) at (5, 1) [scale=.75pt, label=below:$$]{};
\vertex (4a) at (7, 1) [scale=.75pt, label=below:$$]{};
\vertex (5a) at (9, 1) [scale=.75pt, label=below:$$]{};
\vertex (6a) at (11, 1) [scale=.75pt, label=below:$$]{};
\vertex (7a) at (13, 1) [scale=.75pt, label=below:$$]{};

\vertex (1b) at (1, 2) [scale=.75pt, label=below:$$]{};
\vertex (2b) at (3, 2) [scale=.75pt, label=below:$$]{};
\vertex (3b) at (5, 2) [scale=.75pt, label=below:$$]{};
\vertex (4b) at (7, 2) [scale=.75pt, label=below:$$]{};
\vertex (5b) at (9, 2) [scale=.75pt, label=below:$$]{};
\vertex (6b) at (11, 2) [scale=.75pt, label=below:$$]{};
\vertex (7b) at (13, 2) [scale=.75pt, label=below:$$]{};

\vertex (1c) at (1, 3) [scale=.75pt, fill=black, label=below:$$]{};
\vertex (2c) at (3, 3) [scale=.75pt, fill=black,label=below:$$]{};
\vertex (3c) at (5, 3) [scale=.75pt, fill=black,label=below:$$]{};
\vertex (4c) at (7, 3) [scale=.75pt, fill=black,label=below:$$]{};
\vertex (5c) at (9,3) [scale=.75pt, fill=black,label=below:$$]{};
\vertex (6c) at (11, 3) [scale=.75pt, fill=black,label=below:$$]{};
\vertex (7c) at (13, 3) [scale=.75pt,fill=black, label=below:$$]{};

\vertex (1d) at (1, 4) [scale=.75pt, label=below:$$]{};
\vertex (2d) at (3, 4) [scale=.75pt, label=below:$$]{};
\vertex (3d) at (5, 4) [scale=.75pt, label=below:$$]{};
\vertex (4d) at (7, 4) [scale=.75pt, label=below:$$]{};
\vertex (5d) at (9, 4) [scale=.75pt, label=below:$$]{};
\vertex (6d) at (11, 4) [scale=.75pt, label=below:$$]{};
\vertex (7d) at (13, 4) [scale=.75pt, label=below:$$]{};

\vertex (1e) at (1, 5) [scale=.75pt, fill=black, label=below:$$]{};
\vertex (2e) at (3, 5) [scale=.75pt,  fill=black,label=below:$$]{};
\vertex (3e) at (5, 5) [scale=.75pt,  fill=black,label=below:$$]{};
\vertex (4e) at (7, 5) [scale=.75pt,  fill=black,label=below:$$]{};
\vertex (5e) at (9, 5) [scale=.75pt,  fill=black,label=below:$$]{};
\vertex (6e) at (11, 5) [scale=.75pt,  fill=black,label=below:$$]{};
\vertex (7e) at (13, 5) [scale=.75pt,  fill=black,label=below:$$]{};

\vertex (1f) at (1, 6) [scale=.75pt, label=below:$$]{};
\vertex (2f) at (3, 6) [scale=.75pt, label=below:$$]{};
\vertex (3f) at (5, 6) [scale=.75pt, label=below:$$]{};
\vertex (4f) at (7, 6) [scale=.75pt, label=below:$$]{};
\vertex (5f) at (9, 6) [scale=.75pt, label=below:$$]{};
\vertex (6f) at (11, 6) [scale=.75pt, label=below:$$]{};
\vertex (7f) at (13, 6) [scale=.75pt, label=below:$$]{};

\vertex (1g) at (1, 7) [scale=.75pt, label=below:$$]{};
\vertex (2g) at (3, 7) [scale=.75pt, label=below:$$]{};
\vertex (3g) at (5, 7) [scale=.75pt, label=below:$$]{};
\vertex (4g) at (7, 7) [scale=.75pt, label=below:$$]{};
\vertex (5g) at (9, 7) [scale=.75pt, label=below:$$]{};
\vertex (6g) at (11, 7) [scale=.75pt, label=below:$$]{};
\vertex (7g) at (13, 7) [scale=.75pt, label=below:$$]{};

\vertex (1h) at (1, 8) [scale=.75pt, label=below:$$]{};
\vertex (2h) at (3, 8) [scale=.75pt, label=below:$$]{};
\vertex (3h) at (5, 8) [scale=.75pt, label=below:$$]{};
\vertex (4h) at (7, 8) [scale=.75pt, label=below:$$]{};
\vertex (5h) at (9, 8) [scale=.75pt, label=below:$$]{};
\vertex (6h) at (11, 8) [scale=.75pt, label=below:$$]{};
\vertex (7h) at (13, 8) [scale=.75pt, label=below:$$]{};

\vertex (1i) at (1, 9) [scale=.75pt,  fill=black,label=below:$$]{};
\vertex (2i) at (3, 9) [scale=.75pt,fill=black, label=below:$$]{};
\vertex (3i) at (5, 9) [scale=.75pt,fill=black, label=below:$$]{};
\vertex (4i) at (7, 9) [scale=.75pt,fill=black, label=below:$$]{};
\vertex (5i) at (9, 9) [scale=.75pt, fill=black,label=below:$$]{};
\vertex (6i) at (11, 9) [scale=.75pt,fill=black, label=below:$$]{};
\vertex (7i) at (13, 9) [scale=.75pt, fill=black,label=below:$$]{};

\vertex (1j) at (1, 10) [scale=.75pt, label=below:$$]{};
\vertex (2j) at (3, 10) [scale=.75pt, label=below:$$]{};
\vertex (3j) at (5, 10) [scale=.75pt, label=below:$$]{};
\vertex (4j) at (7, 10) [scale=.75pt, label=below:$$]{};
\vertex (5j) at (9, 10) [scale=.75pt, label=below:$$]{};
\vertex (6j) at (11, 10) [scale=.75pt, label=below:$$]{};
\vertex (7j) at (13, 10) [scale=.75pt, label=below:$$]{};

\vertex (1k) at (1, 11) [scale=.75pt, label=below:$$]{};
\vertex (2k) at (3, 11) [scale=.75pt, label=below:$$]{};
\vertex (3k) at (5, 11) [scale=.75pt, label=below:$$]{};
\vertex (4k) at (7, 11) [scale=.75pt, label=below:$$]{};
\vertex (5k) at (9, 11) [scale=.75pt, label=below:$$]{};
\vertex (6k) at (11, 11) [scale=.75pt, label=below:$$]{};
\vertex (7k) at (13, 11) [scale=.75pt, label=below:$$]{};

\vertex (1l) at (1, 12) [scale=.75pt, fill=black, label=below:$$]{};
\vertex (2l) at (3, 12) [scale=.75pt, fill=black, label=below:$$]{};
\vertex (3l) at (5, 12) [scale=.75pt, fill=black, label=below:$$]{};
\vertex (4l) at (7, 12) [scale=.75pt, fill=black, label=below:$$]{};
\vertex (5l) at (9, 12) [scale=.75pt, fill=black, label=below:$$]{};
\vertex (6l) at (11, 12) [scale=.75pt, fill=black, label=below:$$]{};
\vertex (7l) at (13, 12) [scale=.75pt, fill=black, label=below:$$]{};

\vertex (1m) at (1, 13) [scale=.75pt, label=below:$$]{};
\vertex (2m) at (3, 13) [scale=.75pt, label=below:$$]{};
\vertex (3m) at (5, 13) [scale=.75pt, label=below:$$]{};
\vertex (4m) at (7, 13) [scale=.75pt, label=below:$$]{};
\vertex (5m) at (9, 13) [scale=.75pt, label=below:$$]{};
\vertex (6m) at (11, 13) [scale=.75pt, label=below:$$]{};
\vertex (7m) at (13, 13) [scale=.75pt, label=below:$$]{};

\vertex (1n) at (1, 14) [scale=.75pt, label=below:$$]{};
\vertex (2n) at (3, 14) [scale=.75pt, label=below:$$]{};
\vertex (3n) at (5, 14) [scale=.75pt, label=below:$$]{};
\vertex (4n) at (7, 14) [scale=.75pt, label=below:$$]{};
\vertex (5n) at (9, 14) [scale=.75pt, label=below:$$]{};
\vertex (6n) at (11, 14) [scale=.75pt, label=below:$$]{};
\vertex (7n) at (13, 14) [scale=.75pt, label=below:$$]{};

\vertex (1o) at (1, 15) [scale=.75pt, label=below:$$]{};
\vertex (2o) at (3, 15) [scale=.75pt, label=below:$$]{};
\vertex (3o) at (5, 15) [scale=.75pt, label=below:$$]{};
\vertex (4o) at (7, 15) [scale=.75pt, label=below:$$]{};
\vertex (5o) at (9, 15) [scale=.75pt, label=below:$$]{};
\vertex (6o) at (11, 15) [scale=.75pt, label=below:$$]{};
\vertex (7o) at (13, 15) [scale=.75pt, label=below:$$]{};

\vertex (1p) at (1, 16) [scale=.75pt, fill=black, label=below:$$]{};
\vertex (2p) at (3, 16) [scale=.75pt, fill=black, label=below:$$]{};
\vertex (3p) at (5, 16) [scale=.75pt, fill=black, label=below:$$]{};
\vertex (4p) at (7, 16) [scale=.75pt, fill=black, label=below:$$]{};
\vertex (5p) at (9, 16) [scale=.75pt, fill=black, label=below:$$]{};
\vertex (6p) at (11, 16) [scale=.75pt, fill=black, label=below:$$]{};
\vertex (7p) at (13, 16) [scale=.75pt, fill=black, label=below:$$]{};

\vertex (1q) at (1, 17) [scale=.75pt, label=below:$$]{};
\vertex (2q) at (3, 17) [scale=.75pt, label=below:$$]{};
\vertex (3q) at (5, 17) [scale=.75pt, label=below:$$]{};
\vertex (4q) at (7, 17) [scale=.75pt, label=below:$$]{};
\vertex (5q) at (9, 17) [scale=.75pt, label=below:$$]{};
\vertex (6q) at (11, 17) [scale=.75pt, label=below:$$]{};
\vertex (7q) at (13, 17) [scale=.75pt, label=below:$$]{};

\vertex (1r) at (1, 18) [scale=.75pt, fill=black, label=below:$$]{};
\vertex (2r) at (3, 18) [scale=.75pt, fill=black, label=below:$$]{};
\vertex (3r) at (5, 18) [scale=.75pt, fill=black, label=below:$$]{};
\vertex (4r) at (7, 18) [scale=.75pt, fill=black, label=below:$$]{};
\vertex (5r) at (9, 18) [scale=.75pt, fill=black, label=below:$$]{};
\vertex (6r) at (11, 18) [scale=.75pt, fill=black, label=below:$$]{};
\vertex (7r) at (13, 18) [scale=.75pt,  fill=black,label=below:$$]{};

\vertex (1s) at (1, 19) [scale=.75pt, label=below:$$]{};
\vertex (2s) at (3, 19) [scale=.75pt, label=below:$$]{};
\vertex (3s) at (5, 19) [scale=.75pt, label=below:$$]{};
\vertex (4s) at (7, 19) [scale=.75pt, label=below:$$]{};
\vertex (5s) at (9, 19) [scale=.75pt, label=below:$$]{};
\vertex (6s) at (11, 19) [scale=.75pt, label=below:$$]{};
\vertex (7s) at (13, 19) [scale=.75pt, label=below:$$]{};

\vertex (1t) at (1, 20) [scale=.75pt, label=below:$$]{};
\vertex (2t) at (3, 20) [scale=.75pt, label=below:$$]{};
\vertex (3t) at (5, 20) [scale=.75pt, label=below:$$]{};
\vertex (4t) at (7, 20) [scale=.75pt, label=below:$$]{};
\vertex (5t) at (9, 20) [scale=.75pt, label=below:$$]{};
\vertex (6t) at (11, 20) [scale=.75pt, label=below:$$]{};
\vertex (7t) at (13, 20) [scale=.75pt, label=below:$$]{};

\path

	(1) edge[bend left=30] (3) 
	(2) edge (3)
	(3) edge (4)
	(4) edge (5)
	(5) edge (6)
	(5) edge[bend left=30] (7)
	(1a) edge[bend left=30] (3a) 
	(2a) edge (3a)
	(3a) edge (4a)
	(4a) edge (5a)
	(5a) edge (6a)
	(5a) edge[bend left=30] (7a)
	(1b) edge[bend left=30] (3b) 
	(2b) edge (3b)
	(3b) edge (4b)
	(4b) edge (5b)
	(5b) edge (6b)
	(5b) edge[bend left=30] (7b)
	(1c) edge[bend left=30] (3c) 
	(2c) edge (3c)
	(3c) edge (4c)
	(4c) edge (5c)
	(5c) edge (6c)
	(5c) edge[bend left=30] (7c)
	(1d) edge[bend left=30] (3d) 
	(2d) edge (3d)
	(3d) edge (4d)
	(4d) edge (5d)
	(5d) edge (6d)
	(5d) edge[bend left=30] (7d)
	(1e) edge[bend left=30] (3e) 
	(2e) edge (3e)
	(3e) edge (4e)
	(4e) edge (5e)
	(5e) edge (6e)
	(5e) edge[bend left=30] (7e)
	(1f) edge[bend left=30] (3f) 
	(2f) edge (3f)
	(3f) edge (4f)
	(4f) edge (5f)
	(5f) edge (6f)
	(5f) edge[bend left=30] (7f)
	(1g) edge[bend left=30] (3g) 
	(2g) edge (3g)
	(3g) edge (4g)
	(4g) edge (5g)
	(5g) edge (6g)
	(5g) edge[bend left=30] (7g)
	(1h) edge[bend left=30] (3h) 
	(2h) edge (3h)
	(3h) edge (4h)
	(4h) edge (5h)
	(5h) edge (6h)
	(5h) edge[bend left=30] (7h)
	(1i) edge[bend left=30] (3i) 
	(2i) edge (3i)
	(3i) edge (4i)
	(4i) edge (5i)
	(5i) edge (6i)
	(5i) edge[bend left=30] (7i)
	(1j) edge[bend left=30] (3j) 
	(2j) edge (3j)
	(3j) edge (4j)
	(4j) edge (5j)
	(5j) edge (6j)
	(5j) edge[bend left=30] (7j)
	(1k) edge[bend left=30] (3k) 
	(2k) edge (3k)
	(3k) edge (4k)
	(4k) edge (5k)
	(5k) edge (6k)
	(5k) edge[bend left=30] (7k)
	(1l) edge[bend left=30] (3l) 
	(2l) edge (3l)
	(3l) edge (4l)
	(4l) edge (5l)
	(5l) edge (6l)
	(5l) edge[bend left=30] (7l)
	(1m) edge[bend left=30] (3m) 
	(2m) edge (3m)
	(3m) edge (4m)
	(4m) edge (5m)
	(5m) edge (6m)
	(5m) edge[bend left=30] (7m)
	(1n) edge[bend left=30] (3n) 
	(2n) edge (3n)
	(3n) edge (4n)
	(4n) edge (5n)
	(5n) edge (6n)
	(5n) edge[bend left=30] (7n)
	(1o) edge[bend left=30] (3o) 
	(2o) edge (3o)
	(3o) edge (4o)
	(4o) edge (5o)
	(5o) edge (6o)
	(5o) edge[bend left=30] (7o)
	(1p) edge[bend left=30] (3p) 
	(2p) edge (3p)
	(3p) edge (4p)
	(4p) edge (5p)
	(5p) edge (6p)
	(5p) edge[bend left=30] (7p)
	(1q) edge[bend left=30] (3q) 
	(2q) edge (3q)
	(3q) edge (4q)
	(4q) edge (5q)
	(5q) edge (6q)
	(5q) edge[bend left=30] (7q)
	(1r) edge[bend left=30] (3r) 
	(2r) edge (3r)
	(3r) edge (4r)
	(4r) edge (5r)
	(5r) edge (6r)
	(5r) edge[bend left=30] (7r)
	(1s) edge[bend left=30] (3s) 
	(2s) edge (3s)
	(3s) edge (4s)
	(4s) edge (5s)
	(5s) edge (6s)
	(5s) edge[bend left=30] (7s)
	(1t) edge[bend left=30] (3t) 
	(2t) edge (3t)
	(3t) edge (4t)
	(4t) edge (5t)
	(5t) edge (6t)
	(5t) edge[bend left=30] (7t)
	
	(a) edge[bend right=30] (c)
	(b) edge (c)
	(c) edge (d)
	(d) edge (e)
	(e) edge (f)
	(e) edge[bend right=30] (g)
	(g) edge (h)
	(h) edge (i)
	(i) edge (j)
	(i) edge[bend right=30] (k)
	(k) edge (l)
	(l) edge (m)
	(l) edge[bend right=30] (n)
	(h) edge[bend right=30] (o)
	(o) edge (p)
	(p) edge (q)
	(p) edge[bend right=30] (r)
	(r) edge (s)
	(r) edge[bend right=30] (t)
	
	(1a) edge[bend right=30] (1c)
	(1b) edge (1c)
	(1c) edge (1d)
	(1d) edge (1e)
	(1e) edge (1f)
	(1e) edge[bend right=30] (1g)
	(1g) edge (1h)
	(1h) edge (1i)
	(1i) edge (1j)
	(1i) edge[bend right=30] (1k)
	(1k) edge (1l)
	(1l) edge (1m)
	(1l) edge[bend right=30] (1n)
	(1h) edge[bend right=30] (1o)
	(1o) edge (1p)
	(1p) edge (1q)
	(1p) edge[bend right=30] (1r)
	(1r) edge (1s)
	(1r) edge[bend right=30] (1t)
	
	(2a) edge[bend right=30] (2c)
	(2b) edge (2c)
	(2c) edge (2d)
	(2d) edge (2e)
	(2e) edge (2f)
	(2e) edge[bend right=30] (2g)
	(2g) edge (2h)
	(2h) edge (2i)
	(2i) edge (2j)
	(2i) edge[bend right=30] (2k)
	(2k) edge (2l)
	(2l) edge (2m)
	(2l) edge[bend right=30] (2n)
	(2h) edge[bend right=30] (2o)
	(2o) edge (2p)
	(2p) edge (2q)
	(2p) edge[bend right=30] (2r)
	(2r) edge (2s)
	(2r) edge[bend right=30] (2t)
	
	(3a) edge[bend right=30] (3c)
	(3b) edge (3c)
	(3c) edge (3d)
	(3d) edge (3e)
	(3e) edge (3f)
	(3e) edge[bend right=30] (3g)
	(3g) edge (3h)
	(3h) edge (3i)
	(3i) edge (3j)
	(3i) edge[bend right=30] (3k)
	(3k) edge (3l)
	(3l) edge (3m)
	(3l) edge[bend right=30] (3n)
	(3h) edge[bend right=30] (3o)
	(3o) edge (3p)
	(3p) edge (3q)
	(3p) edge[bend right=30] (3r)
	(3r) edge (3s)
	(3r) edge[bend right=30] (3t)
	
	(4a) edge[bend right=30] (4c)
	(4b) edge (4c)
	(4c) edge (4d)
	(4d) edge (4e)
	(4e) edge (4f)
	(4e) edge[bend right=30] (4g)
	(4g) edge (4h)
	(4h) edge (4i)
	(4i) edge (4j)
	(4i) edge[bend right=30] (4k)
	(4k) edge (4l)
	(4l) edge (4m)
	(4l) edge[bend right=30] (4n)
	(4h) edge[bend right=30] (4o)
	(4o) edge (4p)
	(4p) edge (4q)
	(4p) edge[bend right=30] (4r)
	(4r) edge (4s)
	(4r) edge[bend right=30] (4t)
	
	(5a) edge[bend right=30] (5c)
	(5b) edge (5c)
	(5c) edge (5d)
	(5d) edge (5e)
	(5e) edge (5f)
	(5e) edge[bend right=30] (5g)
	(5g) edge (5h)
	(5h) edge (5i)
	(5i) edge (5j)
	(5i) edge[bend right=30] (5k)
	(5k) edge (5l)
	(5l) edge (5m)
	(5l) edge[bend right=30] (5n)
	(5h) edge[bend right=30] (5o)
	(5o) edge (5p)
	(5p) edge (5q)
	(5p) edge[bend right=30] (5r)
	(5r) edge (5s)
	(5r) edge[bend right=30] (5t)
	
	(6a) edge[bend right=30] (6c)
	(6b) edge (6c)
	(6c) edge (6d)
	(6d) edge (6e)
	(6e) edge (6f)
	(6e) edge[bend right=30] (6g)
	(6g) edge (6h)
	(6h) edge (6i)
	(6i) edge (6j)
	(6i) edge[bend right=30] (6k)
	(6k) edge (6l)
	(6l) edge (6m)
	(6l) edge[bend right=30] (6n)
	(6h) edge[bend right=30] (6o)
	(6o) edge (6p)
	(6p) edge (6q)
	(6p) edge[bend right=30] (6r)
	(6r) edge (6s)
	(6r) edge[bend right=30] (6t)
	
	(7a) edge[bend right=30] (7c)
	(7b) edge (7c)
	(7c) edge (7d)
	(7d) edge (7e)
	(7e) edge (7f)
	(7e) edge[bend right=30] (7g)
	(7g) edge (7h)
	(7h) edge (7i)
	(7i) edge (7j)
	(7i) edge[bend right=30] (7k)
	(7k) edge (7l)
	(7l) edge (7m)
	(7l) edge[bend right=30] (7n)
	(7h) edge[bend right=30] (7o)
	(7o) edge (7p)
	(7p) edge (7q)
	(7p) edge[bend right=30] (7r)
	(7r) edge (7s)
	(7r) edge[bend right=30] (7t)

				;	

\end{tikzpicture}
\end{center}
\caption{$G\Box T$}
\label{fig:Cart}
\end{figure}

\begin{theorem}[\cite{electricgrids}]\label{thm:treeequality} For any tree $T$ of order at least $3$, $\gamma_P(T) = \gamma(T)$ if and only if $T$ has a unique $\gamma(T)$-set $S$ and every vertex in $S$ is a strong support vertex.
\end{theorem}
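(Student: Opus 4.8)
Looking at this, I need to prove Theorem~\ref{thm:treeequality}: the characterization that $\gamma_P(T) = \gamma(T)$ for a tree $T$ of order at least $3$ if and only if $T$ has a unique $\gamma(T)$-set $S$ in which every vertex is a strong support vertex.

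\textbf{The plan} is to prove both directions of the biconditional, leveraging the spider-partition characterization $\gamma_P(T) = \mathrm{sp}(T)$ (Theorem~\ref{thm:spider}) and the strong-support bound $v_s(T) \le \gamma_P(T)$ established in the surrounding discussion. For the reverse direction, suppose $T$ has a unique $\gamma(T)$-set $S$ with every vertex a strong support vertex. Since $S$ is a dominating set, it is automatically a power dominating set, so $\gamma_P(T) \le \gamma(T)$. For the reverse inequality, I would use $v_s(T) \le \gamma_P(T)$: every vertex of $S$ is a strong support vertex, so $|S| = \gamma(T) \le v_s(T) \le \gamma_P(T)$, giving equality. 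The mild subtlety is arguing $\gamma(T) \le v_s(T)$, which follows because distinct strong support vertices have disjoint leaf-sets, so the strong support vertices form part of any dominating set; combined with uniqueness of $S$ this forces $S$ to consist exactly of the strong support vertices and $|S| = v_s(T)$.

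\textbf{The forward direction} is the harder part and the main obstacle. Assume $\gamma_P(T) = \gamma(T)$. I must extract both the uniqueness of the $\gamma(T)$-set and the strong-support property. The key observation is that any $\gamma(T)$-set $D$ is a power dominating set, so $\gamma_P(T) \le |D| = \gamma(T) = \gamma_P(T)$ forces every minimum dominating set to be a \emph{minimum} power dominating set as well. I would then analyze what a minimum power dominating set of a tree looks like via the spider partition: a minimum spider partition into $\gamma_P(T)$ spiders must correspond to the domination structure. The crucial claim is that if some vertex $v \in D$ is \emph{not} a strong support vertex, I can produce a power dominating set of size smaller than $\gamma(T)$, or produce a second distinct $\gamma(T)$-set, contradicting the hypothesis.

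To make this rigorous I would argue as follows. Take any $\gamma(T)$-set $D$; it is also a minimum power dominating set. Each vertex of $D$ must dominate its private neighbors, and in a tree a leaf forces its support vertex into every dominating set only when that leaf has no alternative dominator. The heart of the argument is a local exchange/propagation analysis: if $v \in D$ is not a strong support vertex, then $v$ has at most one leaf neighbor, and I can show that either (a) $v$ can be shifted to an adjacent vertex $v'$ yielding a different $\gamma(T)$-set (violating uniqueness), or (b) the propagation step allows a spider partition of $T$ into fewer than $\gamma(T)$ spiders, contradicting $\gamma_P(T) = \gamma(T)$ via Theorem~\ref{thm:spider}. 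The delicate case is when $v$ has exactly one leaf neighbor and its removal must be compensated; here I would use that propagation can traverse a path of degree-two vertices, so a single well-placed vertex can power dominate a longer spider, collapsing two domination-forced vertices into one power dominating vertex and strictly decreasing the count. I expect the bookkeeping of these exchange arguments—ensuring the modified set remains dominating or that the modified spider partition remains valid—to be the most technically involved step, and I would organize it by rooting $T$ and arguing on a deepest non-strong-support vertex in $D$ to keep the exchanges localized.
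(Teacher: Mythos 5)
First, note that the paper does not actually prove this statement: Theorem~\ref{thm:treeequality} is quoted from \cite{electricgrids} and used as a black box, so your proposal can only be judged on its own merits. Your reverse direction is correct and essentially complete: any dominating set is a power dominating set, giving $\gamma_P(T)\le\gamma(T)$, and since every vertex of $S$ is a strong support vertex, $\gamma(T)=|S|\le v_s(T)\le\gamma_P(T)$ by the paper's observation that $v_s(G)\le\gamma_P(G)$. (Two small remarks: the inequality $|S|\le v_s(T)$ is immediate from $S$ being a set of strong support vertices, so your detour through disjoint leaf-sets is unnecessary here; and uniqueness of $S$ is never used in this direction. In fact uniqueness is cheap to recover in general: a strong support vertex lies in \emph{every} minimum dominating set, since a dominating set avoiding it must contain both of its leaves and swapping the two leaves for the vertex strictly shrinks the set; hence any $\gamma(T)$-set consisting of strong support vertices is contained in, and therefore equals, every $\gamma(T)$-set.)

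The genuine gap is the forward direction, which you correctly identify as the hard part but then only outline. Two concrete problems. First, your dichotomy is logically misdirected: in the forward direction the hypothesis is $\gamma_P(T)=\gamma(T)$, and uniqueness of the $\gamma(T)$-set is a \emph{conclusion}, so producing ``a second distinct $\gamma(T)$-set (violating uniqueness)'' contradicts nothing. To use that branch you would need the separate, nontrivial claim that non-uniqueness of the $\gamma(T)$-set forces $\gamma_P(T)<\gamma(T)$, which you never establish; the cleaner route is to prove only that some $\gamma(T)$-set consists entirely of strong support vertices and then deduce uniqueness by the exchange observation above. Second, and more seriously, the core of the theorem --- that if $v\in D$ is not a strong support vertex then one can construct a power dominating set of size $\gamma(T)-1$ (e.g., by merging two spider subsets via Theorem~\ref{thm:spider}, or by exploiting propagation along paths) --- is exactly what you defer with ``I expect the bookkeeping of these exchange arguments \dots to be the most technically involved step.'' No explicit construction, case analysis, or induction is carried out, and the delicate case you yourself flag (a vertex with exactly one leaf neighbor) is left entirely open. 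As written, the proposal proves one implication and gestures at the other, so it does not constitute a proof of the theorem.
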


\medskip
\noindent \textbf{Theorem \ref{thm:treeprod}}  \emph{If $T_1$ and $T_2$ are trees with $\gamma_P(T_1)= \gamma(T_1)$ and $\gamma_P(T_2) = \gamma(T_2)$, then $\gamma_P(T_1 \Box T_2) \ge \gamma_P(T_1)\gamma_P(T_2)$.}

\begin{proof} Let $S_1$ be a $\gamma(T_1)$-set  and let $S_2$ be a $\gamma(T_2)$-set. By Theorem~\ref{thm:treeequality}, every vertex in $S_1$ is a strong support vertex in $T_1$ and every vertex in $S_2$ is a strong support vertex in $T_2$. Enumerate the vertices of $S_1= \{x_1, \dots, x_k\}$ and enumerate the vertices of $S_2 = \{y_1, \dots, y_{\ell}\}$. Choose a partition $\Pi = \Pi_1\cup \cdots \cup \Pi_k$ for $V(T_1)$ such that $x_i \in \Pi_i$ and $\Pi_i \subseteq N_{T_1}[x_i]$ for each $1 \le i \le k$. Similarly, choose a partition $\Omega = \Omega_1 \cup \cdots \cup \Omega_{\ell}$ such that $y_i \in \Omega_i$ and $\Omega_i \subseteq N_{T_2}[y_i]$ for all $1 \le i \le \ell$. Define 
\[\mathcal{B}_{i,j} = \{(u,v) \mid u \in \Pi_i, v \in \Omega_j\}\]
for all $1 \le i \le k$, $1 \le j \le \ell$. Note that $\cup_{i,j}\mathcal{B}_{i,j}$ is a partition of $V(T_1 \Box T_2)$. Suppose there exists a power dominating set $D$ of $T_1 \Box T_2$ of cardinality $|S_1||S_2| -1$. This implies that $D \cap \mathcal{B}_{i,j} = \emptyset$ for some $i\in [k], j\in [\ell]$. Note that since $x_i$ is a strong support vertex of $T_1$, there exist two leaves, say $\ell_1$ and $\ell_2$, of $x_i$. Moreover, $\{\ell_1, \ell_2\} \subset \Pi_i$. Similarly, there exist two leaves, say $\ell'_1$ and $\ell'_2$, of $y_j$ and $\{\ell'_1, \ell'_2\} \subset \Omega_j$. 
 However, the only vertices of 
\[X_{ij} = \{(x_i, y_j), (x_i, \ell_1'), (x_i, \ell'_2), (\ell_1, y_j), (\ell_1, \ell'_1), (\ell_1, \ell'_2), (\ell_2, y_j), (\ell_2, \ell'_1), (\ell_2, \ell'_2)\}\]
adjacent to a vertex outside of $X$ are $(x_i, y_j), (x_i, \ell_1'), (x_i, \ell'_2), (\ell_1, y_j),$ and $(\ell_2, y_j)$. Even if all of these vertices are observed at some time step, $(\ell_1, \ell_1')$ will never be observed as both $(x_i, \ell_1')$ and $(\ell_1, y_j)$ each have two unobserved neighbors. Therefore, no such power dominating set of $T_1 \Box T_2$ exists and $\gamma_P(T_1\Box T_2) \ge \gamma_P(T_1)\gamma_P(T_2)$. 
\end{proof}

In Theorem  \ref{thm:treeprod}, we could provide a new lower bound on the power domination number of two trees $T_1$ and $T_2$ when  $\gamma_P(T_1)= \gamma(T_1)$ and $\gamma_P(T_2) = \gamma(T_2)$. By Theorem \ref{thm:treeequality}, we also know that the minimum power dominating sets of $T_1$ and $T_2$ consisted of strong support vertices. Since the number of strong support vertices is a lower bound on the power domination number of a graph, we know $\gamma_P(T_1) =  v_s(T_1)$ and  $\gamma_P(T_2) = v_s(T_2)$ in this case. We can also use strong support vertices to extend this result and bound the power domination number of general Cartesian products.

\begin{lemma}
For any connected graphs $G$ and $H$, $v_s(G)v_s(H) \leq \gamma_P(G \Box H)$. In addition, if $\gamma_P(G) = v_s(G)$ and $\gamma_P(H) = v_s(H)$, then $\gamma_P(G)\gamma_P(H) \leq \gamma_P(G \Box H)$.
\label{cartlowervs}
\end{lemma}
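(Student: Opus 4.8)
The plan is to deduce the second inequality immediately from the first: if $\gamma_P(G) = v_s(G)$ and $\gamma_P(H) = v_s(H)$, then $\gamma_P(G)\gamma_P(H) = v_s(G)v_s(H)$, so it suffices to establish $v_s(G)v_s(H) \le \gamma_P(G\Box H)$. To this end I would fix an \emph{arbitrary} power dominating set $D$ of $G \Box H$ and show $|D| \ge v_s(G)v_s(H)$, from which the bound follows since $D$ is arbitrary. The overall strategy generalizes the obstruction used in the proof of Theorem~\ref{thm:treeprod}, but since we no longer have trees we exploit a family of \emph{disjoint} grids rather than a spider partition.

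First I would set up the obstruction grids. Let $x_1, \dots, x_k$ be the strong support vertices of $G$ (so $k = v_s(G)$) and $y_1, \dots, y_m$ those of $H$ (so $m = v_s(H)$). For each $x_i$ choose two leaf-neighbors $\ell_1^i, \ell_2^i$, and for each $y_j$ choose two leaf-neighbors $r_1^j, r_2^j$. Because a leaf has a unique neighbor and a strong support vertex is not a leaf, the sets $\{x_i, \ell_1^i, \ell_2^i\}$ are pairwise disjoint in $G$ and likewise $\{y_j, r_1^j, r_2^j\}$ in $H$; consequently the grids
\[ X_{ij} = \{x_i, \ell_1^i, \ell_2^i\} \times \{y_j, r_1^j, r_2^j\} \]
are pairwise disjoint subsets of $V(G \Box H)$, giving $km$ of them in total.

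The key step is to prove $D \cap X_{ij} \neq \emptyset$ for every $i,j$. Suppose to the contrary that $D \cap X_{ij} = \emptyset$. The four \emph{corner} vertices $(\ell_a^i, r_b^j)$ with $a,b \in \{1,2\}$ each have \emph{exactly} two neighbors in $G \Box H$, namely $(x_i, r_b^j)$ and $(\ell_a^i, y_j)$, and both lie in $X_{ij}$; here leaf-ness is essential, since it forces the only $G$- and $H$-neighbors to be $x_i$ and $y_j$. Thus no corner is observed in the domination step. Each of the four ``edge'' vertices $(x_i, r_1^j), (x_i, r_2^j), (\ell_1^i, y_j), (\ell_2^i, y_j)$ is adjacent to exactly two of the corners, so for such a vertex to force one corner under the propagation rule, the \emph{other} corner adjacent to it must already be observed. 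I would then consider the first corner that ever becomes observed: its forcing vertex requires a second, as-yet-unobserved corner to already be observed, a contradiction. Hence no corner is ever observed, contradicting that $D$ is a power dominating set.

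Having established $D \cap X_{ij} \neq \emptyset$ for all $i,j$, the disjointness of the grids yields $|D| \ge km = v_s(G)v_s(H)$, completing the first inequality and hence the lemma. The step I expect to require the most care is the corner argument: one must verify that each corner's \emph{entire} neighborhood lies inside $X_{ij}$ and phrase the ``first corner to be observed'' contradiction precisely, rather than merely asserting that two fixed edge vertices each retain two unobserved neighbors as was done in Theorem~\ref{thm:treeprod}.
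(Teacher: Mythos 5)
Your proposal is correct and follows essentially the same route as the paper: for each pair consisting of a strong support vertex of $G$ and one of $H$, together with two leaf-neighbors of each, form the $3\times 3$ grid $X_{ij}$ and show that every power dominating set must meet each (pairwise disjoint) such grid because the four corner vertices, whose entire neighborhoods lie inside $X_{ij}$, can never be forced. Your write-up is in fact slightly more careful than the paper's, which asserts the obstruction without the explicit ``first corner observed'' argument or the disjointness verification, but the underlying idea is identical.
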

\begin{proof}
Let $D$ be a power dominating set of $G \Box H$. Assume $x_i$ is a strong support vertex of $G$ with leaves $\ell_1$ and $\ell_2$, and assume $y_i$ is strong support vertex of $H$ with leaves $\ell'_1$ and $\ell'_2$. Let 
\[
X_{ij} = \{(x_i, y_j), (x_i, \ell_1'), (x_i, \ell'_2), (\ell_1, y_j), (\ell_1, \ell'_1), (\ell_1, \ell'_2), (\ell_2, y_j), (\ell_2, \ell'_1), (\ell_2, \ell'_2)\}.
\]
First, assume $X_{ij} \cap D = \emptyset$. Note that the only vertices of $X_{ij}$ adjacent to a vertex outside of $X_{ij}$ are $(x_i, y_j), (x_i, \ell_1'), (x_i, \ell'_2), (\ell_1, y_j),$ and $(\ell_2, y_j)$. Even if all of these vertices are observed at some time step, it follows that $(\ell_1, \ell_1')$ will never be observed. Thus, it must be the case that $X_{ij}  \cap D \neq \emptyset$. Since similar arguments hold for any support vertices of $G$ and $H$, $v_s(G)v_s(H) \leq |D|$.
\end{proof}

For any graphs $G$ and $H$,  $\gamma_P(G \Box H) \leq Z(G \Box H )$. Therefore, the above result provides a lower bound on the zero forcing number of $G\Box H$. 

\begin{corollary}
For any connected graphs $G$ and $H$, $v_s(G)v_s(H) \leq Z(G \Box H)$. 
\end{corollary}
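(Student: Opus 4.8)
The plan is to obtain the statement simply by chaining together two inequalities that are already available, so no fresh combinatorial work is needed. First I would invoke Lemma~\ref{cartlowervs}, which supplies the lower bound $v_s(G)v_s(H) \le \gamma_P(G\Box H)$ for any connected graphs $G$ and $H$. Second I would use the elementary observation recorded in the introduction that every zero forcing set of a graph is in particular a power dominating set: a zero forcing set observes the entire graph using only the propagation step, so it certainly observes the graph under the combined domination/propagation process. Applied to the product graph, this gives $\gamma_P(G\Box H) \le Z(G\Box H)$.

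Combining the two bounds in the order $v_s(G)v_s(H) \le \gamma_P(G\Box H) \le Z(G\Box H)$ immediately produces the desired inequality $v_s(G)v_s(H) \le Z(G\Box H)$. There is essentially no obstacle here: all of the substance lives in Lemma~\ref{cartlowervs}, whose proof isolates, for each pair of strong support vertices $x_i$ and $y_j$, a nine-vertex block $X_{ij}$ that any power dominating set — and hence any zero forcing set — must intersect. The only point worth confirming is that the inequality $\gamma_P \le Z$ is being applied to the product $G\Box H$ itself rather than to its factors, which is legitimate since $\gamma_P(K) \le Z(K)$ holds for every graph $K$. Thus the corollary follows as a one-line consequence of the preceding lemma together with the universal relation $\gamma_P \le Z$.
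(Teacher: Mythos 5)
Your proposal is correct and matches the paper's argument exactly: the corollary is obtained by chaining the bound $v_s(G)v_s(H) \le \gamma_P(G \Box H)$ from Lemma~\ref{cartlowervs} with the universal inequality $\gamma_P(G \Box H) \le Z(G \Box H)$, which the paper states in the sentence immediately preceding the corollary. Nothing further is needed.
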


We will now consider the power domination number of certain Cartesian products. The hypercube graph, $Q_n$, can be defined recursively as $Q_n := Q_{n - 1} \Box K_2$ for $n \geq 1$, where $Q_0 = K_1$. In \cite{deanetal}, Dean et al. conjectured that $\gamma_P(Q_n) = \gamma(Q_{n - 1})$. While this conjecture was disproven in \cite{paichui}, an interesting question that arises is if there exists a graph $G$ such that $\gamma_P(G \Box K_2) = \gamma(G)$. In order to answer this question, we first turn our attention to upper bounds on $\gamma_P(G \Box H)$. A general upper bound of $\gamma_P(G \Box H)$ was provided in \cite{upperboundpowdom}.

\begin{theorem}\cite{upperboundpowdom}
For any connected graphs $G$ and $H$,
$$\gamma_P(G \Box H) \leq \min\{\gamma_P(G)|V(H)|, \gamma_P(H)|V(G)|\}.$$
\label{upperboundpowdom}
\end{theorem}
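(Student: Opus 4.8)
The plan is to prove the first inequality $\gamma_P(G \Box H) \le \gamma_P(G)|V(H)|$; the second, $\gamma_P(G\Box H)\le\gamma_P(H)|V(G)|$, then follows by the symmetry of the Cartesian product, and taking the minimum of the two gives the stated bound. I would fix a minimum power dominating set $S$ of $G$, so that $|S| = \gamma_P(G)$, and take as a candidate power dominating set of $G \Box H$ the full cylinder $S \times V(H) = \{(s,h) : s \in S,\, h \in V(H)\}$, which has cardinality $\gamma_P(G)|V(H)|$. The goal is to show that this set observes all of $G \Box H$.

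The key observation is that placing a copy of $S$ in \emph{every} $G$-layer allows the propagation step in $G \Box H$ to faithfully simulate the propagation step of the power domination process in $G$, simultaneously across all layers. Concretely, for $g \in V(G)$ and $h\in V(H)$ the neighbors of $(g,h)$ in $G \Box H$ split into the $G$-direction neighbors $\{(g',h) : g' \in N_G(g)\}$ and the $H$-direction neighbors $\{(g,h') : h' \in N_H(h)\}$. Let $O_0 = N_G[S]$ be the set observed in $G$ after its domination step, and let $O_0 \subseteq O_1 \subseteq \cdots \subseteq O_T = V(G)$ be the sets observed in $G$ after each successive forcing step of the power domination process started from $S$; such a chain ending at $V(G)$ exists precisely because $S$ power dominates $G$. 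I would then prove, by induction on $t$, the invariant that after mimicking the first $t$ forces of $G$ inside $G \Box H$, the observed set contains the uniform cylinder $O_t \times V(H)$.

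For the base case, the domination step applied to $S \times V(H)$ observes $N_G[S] \times V(H) = O_0 \times V(H)$. For the inductive step, suppose $O_t \times V(H)$ is observed and that in $G$ some observed vertex $v \in O_t$ forces an unobserved neighbor $w$, so that $w$ is the unique $G$-neighbor of $v$ lying outside $O_t$. Fix any $h \in V(H)$ and consider $(v,h)$, which is observed since $v \in O_t$. Its $H$-direction neighbors $(v,h')$ all lie in $O_t \times V(H)$ and are therefore already observed, and among its $G$-direction neighbors only $(w,h)$ can fail to be observed; hence $(v,h)$ has at most one unobserved neighbor, so $(w,h)$ becomes observed (either it already was, or it is forced by $(v,h)$). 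Since this holds for every $h \in V(H)$, the entire set $\{w\} \times V(H)$ becomes observed, and the observed set now contains $O_{t+1} \times V(H)$, completing the induction. Taking $t = T$ yields $V(G) \times V(H) = V(G \Box H)$, so $S \times V(H)$ is a power dominating set and $\gamma_P(G \Box H) \le \gamma_P(G)|V(H)|$.

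The step I expect to require the most care is this inductive lifting of a single force: one must rule out that the $H$-direction neighbors of $(v,h)$ obstruct the force by supplying a second unobserved neighbor. This is exactly what the choice of the \emph{full} cylinder $S \times V(H)$, together with the uniformity invariant $O_t \times V(H)$, is designed to guarantee — had we instead placed $S$ in only a single layer, a vertex could retain an unobserved partner in an adjacent layer and the simulation would break down. Maintaining the observed region as a layer-uniform cylinder at every stage is therefore the crux of the argument, and everything else is a routine translation between the definitions of the process in $G$ and in $G \Box H$.
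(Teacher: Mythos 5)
Your proof is correct, but there is an important point about the comparison: the paper contains no proof of this theorem at all---it is stated as a quoted result from \cite{upperboundpowdom}---so there is no in-paper argument to measure yours against. The closest relative inside the paper is the proof of Lemma~\ref{lem:upperzero}, and your argument is the same device with the roles of the two processes swapped: there the authors place a dominating set of $G$ against a zero forcing set of $H$, so that $V(G)\times Z$ is observed uniformly in every $H$-layer and the zero forcing process of $H$ runs unobstructed in each layer simultaneously; you place a power dominating set of $G$ against \emph{all} of $V(H)$, so that the layer-uniform invariant $O_t \times V(H)$ lets the power domination process of $G$ run unobstructed in every $G$-layer. Your base case computation $N[S \times V(H)] = N_G[S]\times V(H)$ is right, the lifting of a single force is right (and your implicit serialization of simultaneous forces in $G$ into one force per step is harmless, since any zero forcing process can be carried out one force at a time), and deducing the second inequality from the commutativity of $\Box$ is fine. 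One small observation worth adding: your construction and the paper's Lemma~\ref{lem:upperzero} give incomparable bounds in general ($\gamma(G)Z(H)$ versus $\gamma_P(G)|V(H)|$); for instance when $H$ is a path the lemma's bound $\gamma(G)$ is far stronger, whereas for $H$ complete and $G$ with $\gamma_P(G)\ll\gamma(G)$ the theorem's bound can win.
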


We construct a new upper bound based on the domination and zero forcing numbers of the graphs.

\begin{lemma}\label{lem:upperzero}
For any connected graphs $G$ and $H$,
\[\gamma_P(G \Box H) \leq \min\{\gamma(G)Z(H), \gamma(H)Z(G)\}.\]
\label{upperboundgammazero}
\end{lemma}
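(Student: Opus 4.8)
The plan is to prove the single inequality $\gamma_P(G \Box H) \le \gamma(G)\,Z(H)$; the symmetric bound $\gamma_P(G \Box H) \le \gamma(H)\,Z(G)$ then follows immediately by interchanging the roles of $G$ and $H$, and taking the minimum yields the lemma. To establish the first inequality I would exhibit an explicit power dominating set of the claimed size. Let $D$ be a $\gamma(G)$-set and let $Z$ be a minimum zero forcing set of $H$, so that $|D| = \gamma(G)$ and $|Z| = Z(H)$. I would take $S = D \times Z = \{(g,h) : g \in D,\ h \in Z\}$, which has cardinality $\gamma(G)Z(H)$, and argue that $S$ power dominates $G \Box H$.

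The first step is to analyze the domination (initialization) step. Fix $h \in Z$ and consider the $G$-layer $V(G)\times\{h\}$. Since $D$ dominates $G$, every vertex $(g,h)$ of this layer either lies in $S$ (when $g \in D$) or is adjacent in $G \Box H$ to some $(g',h)$ with $g' \in D \cap N_G(g)$, and is therefore dominated by $S$. Consequently, after the initialization step every layer $V(G)\times\{h\}$ with $h \in Z$ is \emph{fully observed}, meaning all of its vertices are observed.

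The heart of the argument is to lift a zero forcing process of $H$ to $G \Box H$. Because $Z$ is a zero forcing set of $H$, there is a sequence of forces starting from $Z$ that observes all of $V(H)$. I would prove, by induction along this sequence, the statement: if $W \supseteq Z$ is the set of vertices observed so far in $H$, then in $G \Box H$ one can reach a state in which every layer $V(G)\times\{h\}$ with $h \in W$ is fully observed. The base case $W = Z$ is exactly the previous paragraph. For the inductive step, suppose $h \in W$ forces $h' \notin W$ in $H$, so that $h'$ is the unique unobserved neighbor of $h$ and every other neighbor of $h$ already lies in $W$. For each $g \in V(G)$, consider the vertex $(g,h)$: its neighbors in the $G$-direction all lie in the fully observed layer $h$, while its neighbors in the $H$-direction are the vertices $(g,h'')$ with $h'' \in N_H(h)$, all of which are observed except possibly $(g,h')$, since each such $h'' \neq h'$ lies in $W$ and so its layer is fully observed by the induction hypothesis. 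Thus $(g,h')$ is the unique unobserved neighbor of $(g,h)$, and $(g,h)$ forces $(g,h')$; performing this force for every $g \in V(G)$ makes the entire layer $h'$ fully observed, completing the induction.

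Once the induction reaches $W = V(H)$, every layer is fully observed, so $S$ observes all of $G \Box H$ and is a power dominating set, giving $\gamma_P(G \Box H) \le |S| = \gamma(G)Z(H)$. The one point that requires care — and the only place where the argument could break — is the verification in the inductive step that $(g,h')$ really is the unique unobserved neighbor of $(g,h)$: this relies simultaneously on the completeness of layer $h$ (so all $G$-direction neighbors are observed) and on the induction hypothesis applied to each neighbor $h'' \in W$ of $h$ (so all remaining $H$-direction neighbors are observed). Granting these, a force of $(g_1,h')$ never affects the status of $(g_2,h)$ for $g_2 \neq g_1$, since $(g_1,h')$ and $(g_2,h)$ are nonadjacent when $h' \neq h$; hence the forces in the new layer do not interfere and may be carried out in any order.
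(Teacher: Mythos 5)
Your proposal is correct and follows essentially the same approach as the paper: take $S = D \times Z$ for a minimum dominating set $D$ of $G$ and a minimum zero forcing set $Z$ of $H$, observe all layers $V(G)\times\{h\}$ with $h \in Z$ after the domination step, and lift the zero forcing process of $H$ to $G \Box H$. Your inductive argument is in fact more careful than the paper's one-line assertion, since it explicitly verifies that the $G$-direction neighbors of a forcing vertex are already observed (because entire $G$-layers are observed at once), which is exactly the point the paper's terse proof glosses over.
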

\begin{proof}
We will first show $\gamma_P(G \Box H) \leq \gamma(G)Z(H)$. Let $D$ be a minimum dominating set of $G$ and $Z$ be a minimum zero forcing set of $H$. We claim that $S := D \times H$ is a power dominating set of $G\Box H$. Note that each vertex in $V(G) \times Z$ is dominated by $S$. It follows that for each $g \in V(G)$, each vertex in $\{g\} \times V(H)$ will be observed as $Z$ is a zero forcing set of $H$.  Thus, $S$ is a power dominating set of $G \Box H$ and $\gamma_P(G \Box H) \leq \gamma(G)Z(H)$. Similarly, one can prove $\gamma_P(G \Box H) \leq \gamma(H)Z(G)$.
\end{proof}

When $Z(H) = 1$, Lemma~\ref{lem:upperzero} yields $\gamma_P(G \Box H) \leq \gamma(G)$. Note that $Z(H) = 1$ if and only if $H$ is a path. Similarly, when $\gamma(H) = 1$, Lemma~\ref{lem:upperzero} yields $\gamma_P(G \Box H) \leq Z(G)$. By Lemmas \ref{thm:Cartlower} and \ref{cartlowervs}, we also know that $\gamma_P(G) \leq \gamma_P(G \Box H)$ and $v_s(G)v_s(H) \le \gamma_P(G \Box H)$. This in turn allows us to find the power domination number of $\gamma_P(G \Box H)$ for certain classes of graphs.

\begin{corollary}
Let $G$ be a connected graph.
\begin{enumerate} 
\item If $\gamma_P(G) = \gamma(G)$, then $\gamma_P(G \Box P_n) = \gamma(G)$.
\item If $\gamma_P(G) = Z(G)$ and $\gamma(H) = 1$, then $\gamma_P(G \Box H) = Z(G)$.
\item If $v_s(G)= \gamma(G)$ and $v_s(H) = Z(H)$, then $\gamma_P(G \Box H) = \gamma(G)Z(H)$.
\label{cartprodpowdom}
\end{enumerate}
\end{corollary}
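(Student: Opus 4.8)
The plan is to prove each of the three equalities by sandwiching $\gamma_P(G \Box H)$ between a matching lower and upper bound, all of which are immediately available from the preceding lemmas; no new construction is required. In every case the upper bound will come from Lemma~\ref{lem:upperzero}, and the lower bound from either Lemma~\ref{thm:Cartlower} or Lemma~\ref{cartlowervs}, depending on which hypotheses are in force.

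For part (1), I would first recall, as observed in the discussion following Lemma~\ref{lem:upperzero}, that $Z(P_n)=1$ since a path is precisely a graph with zero forcing number one. Applying Lemma~\ref{lem:upperzero} with $H=P_n$ then gives $\gamma_P(G \Box P_n) \le \gamma(G)Z(P_n) = \gamma(G)$. For the reverse inequality, since the Cartesian product is commutative, Lemma~\ref{thm:Cartlower} yields $\gamma_P(G \Box P_n) \ge \gamma_P(G)$, and the hypothesis $\gamma_P(G)=\gamma(G)$ closes the gap. Part (2) is entirely analogous: with $\gamma(H)=1$, Lemma~\ref{lem:upperzero} gives $\gamma_P(G \Box H) \le \gamma(H)Z(G) = Z(G)$, while the symmetric form of Lemma~\ref{thm:Cartlower} gives $\gamma_P(G \Box H) \ge \gamma_P(G) = Z(G)$ by hypothesis, so equality holds.

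For part (3), Lemma~\ref{lem:upperzero} directly supplies $\gamma_P(G \Box H) \le \gamma(G)Z(H)$. For the lower bound I would instead invoke Lemma~\ref{cartlowervs}, which gives $v_s(G)v_s(H) \le \gamma_P(G \Box H)$; substituting the hypotheses $v_s(G)=\gamma(G)$ and $v_s(H)=Z(H)$ converts this into $\gamma(G)Z(H) \le \gamma_P(G \Box H)$. The two bounds coincide, yielding the claimed equality.

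There is no genuine obstacle here — the argument is essentially a bookkeeping exercise in matching each hypothesis to the bound it activates. The only point requiring care is selecting the correct lower-bound lemma for each part: parts (1) and (2) rely on the single-factor bound $\gamma_P(G) \le \gamma_P(G \Box H)$, whereas part (3) needs the product bound $v_s(G)v_s(H) \le \gamma_P(G \Box H)$, and one must verify in each case that the stated hypotheses are exactly what turn the relevant inequality into the target value.
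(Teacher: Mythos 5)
Your proposal is correct and follows exactly the route the paper takes: upper bounds from Lemma~\ref{lem:upperzero} (using $Z(P_n)=1$ for part~(1) and $\gamma(H)=1$ for part~(2)), with lower bounds from Lemma~\ref{thm:Cartlower} for parts~(1) and~(2) and from Lemma~\ref{cartlowervs} for part~(3). The paper presents this as immediate from the surrounding discussion, and your bookkeeping matches it precisely.
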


We can now turn our attention back to the question posed earlier: does there exist a graph $G$ such that $\gamma_P(G \Box K_2) = \gamma(G)$? From Corollary \ref{cartprodpowdom}, we know $\gamma_P(G \Box K_2) = \gamma(G)$ if $\gamma_P(G) = \gamma(G)$. For example, when $G = K_{m, n}$, then $\gamma_P(K_{m, n} \Box K_2) = \gamma(K_{m, n})$. In addition, if $\gamma_P(G) = Z(G)$, then $\gamma_P(G \Box K_2) = Z(G)$. Finally, note that Corollary \ref{cartprodpowdom} provides another example of a cubic graph where the bound given in Theorem \ref{thm:uppern3} is sharp. Consider $G = C_3 \Box K_2$. Since $\gamma_P(C_3) = \gamma(C_3)$, it follows that $\gamma_P(C_3 \Box K_2) = \gamma(C_3) =  \frac{n}{6}$.


\section{Conclusions}
\label{conc}
In this paper, we improved on the upper bound of the power domination number of  claw-free diamond-free cubic graphs. In particular, in Theorem \ref{thm:uppern3}, we showed that if $G$ is a claw-free diamond-free cubic graph of order $n$, then $\gamma_P(G) \le n/6$ and this bound is sharp. To do so, we created power dominating sets based on a specific $2$-factor of the graph where each cycle in the $2$-factor has length $6$ or more. One can verify that given such a $2$-factor $\mathcal{C} = C_1 \cup \cdots \cup C_k$ where $|V(C_i)| \ge 6$ for $i \in [k]$, that $\gamma_P(G) \ge k$. Thus, if there exists a $2$-factor of $G$ that consists only of cycles of length $6$, then $\gamma_P(G) = n/6$. An example of such a graph is shown in Figure~\ref{fig:sharp}. One open problem to consider is if these are the only such claw-free diamond-free cubic graphs whose power domination number equals this bound.

We also considered the power domination number of Cartesian products. We showed that a Vizing-like inequality holds in the case of certain trees. In fact, in Theorem \ref{thm:treeprod}, we proved that if $T_1$ and $T_2$ are trees with $\gamma_P(T_1)= \gamma(T_1)$ and $\gamma_P(T_2) = \gamma(T_2)$, then $\gamma_P(T_1 \Box T_2) \ge \gamma_P(T_1)\gamma_P(T_2)$. The next immediate question that comes to mind is if $\gamma(T_1\Box T_2) \ge \gamma_P(T_1)\gamma_P(T_2)$ for any trees $T_1$ and $T_2$? Additionally, we found graphs $G$ where $\gamma_P(G\Box K_2) = \gamma(G)$. Can we classify all graphs $G$ where $\gamma_P(G\Box K_2) = \gamma(G)$?

\end{document}